\numberwithin{equation}{section}
\newtheorem{theorem}{Theorem}[section]
\newtheorem{lemma}[theorem]{Lemma}
\theoremstyle{plain} 
\newtheorem{proposition}[theorem]{Proposition}
\newtheorem*{maintheorem*}{Main Theorem}
\newtheorem*{conjecture*}{Conjecture}
\newtheorem*{theorem*}{Theorem}
\newtheorem*{proposition*}{Proposition}
\newtheorem*{corollary*}{Corollary}
\theoremstyle{definition} 
\newtheorem{definition}[theorem]{Definition}
\theoremstyle{remark}  
\newtheorem{remark}[theorem]{Remark}
\newtheorem{example}[theorem]{Example}
\newtheorem*{remarks*}{Remarks}
\newtheorem*{remark*}{Remark}
\newtheorem*{claim*}{Claim}
\newcommand{\cmark}{\ding{51}}
\newcommand{\cohen}{\poset{C}}
\newcommand{\conc}{\smallfrown}
\newcommand{\cov}{\textsf{cov}}
\newcommand{\club}{\mathrm{Club}}
\newcommand{\clubmiller}{\miller^\club_\kappa}
\newcommand{\dom}{\operatorname{dom}}
\newcommand{\ideal}{\mathcal}
\newcommand{\ifif}{\Leftrightarrow}
\newcommand{\force}{\Vdash}
\newcommand{\mathias}{\poset{T}}
\newcommand{\meager}{\ideal{M}}
\newcommand{\miller}{\poset{M}}
\newcommand{\poset}{\mathbb}
\newcommand{\random}{\poset{B}}
\newcommand{\restric}{{\upharpoonright}}
\newcommand{\silver}{\poset{V}}
\newcommand{\splitting}{\mathsf{Split}}
\newcommand{\stem}{\textsf{stem}}
\newcommand{\successor}{\textsf{succ}}
\newcommand{\such}{\; : \;}
\newcommand{\xmark}{\ding{55}}
\begin{document}

\title[More on trees and Cohen reals\today]{More on trees and Cohen reals}
\author{Giorgio Laguzzi, Brendan Stuber-Rousselle}

\begin{abstract}
In this paper we analyse some questions concerning trees on $\kappa$, both for the  countable and the uncountable case, and the connections with Cohen reals. In particular, we provide a proof for one of the implications left open  in \cite[Question 5.2]{FKK16} about the diagram for regularity properties. 
\end{abstract}

\maketitle

\section{Introduction}

Throughout the paper we deal with trees on $\eta^{<\kappa}$, with $\kappa \geq \omega$ being any regular cardinal and $\eta\geq 2$ or if $\eta$ is infinite then $\eta$ regular too.

A tree-forcing $\poset{P}$ is a poset whose conditions are perfect trees $p \subseteq \eta^{<\kappa}$ with the property that for every $p \in \poset{P}$ and every $t \in p$ one has $p \restric t := \{ t' \in p: t' \subseteq t \vee t \subseteq t' \} \in \poset{P}$; the ordering is $q \leq p \Leftrightarrow q \subseteq p$. In case $\kappa=\omega$ and $\eta \in \{ 2, \omega \}$ some of the most popular tree-forcings are for instance: the Hechler forcing $\poset{D}$ (\cite[Def. 3.1.9, p.104]{bart}), eventually different forcing $\poset{E}$ (\cite[Def. 7.4.8, p.366]{bart}), Sacks forcing (see \cite[p.3]{Bre95}), Silver forcing $\poset{V}$ (see \cite[p.4]{Bre95}), Miller forcing $\poset{M}$ (see \cite[p.3]{Bre95}), Laver forcing (see \cite[p.3]{Bre95}), Mathias forcing $\poset{R}$ (see \cite[p.4]{Bre95}), random forcing $\random$ (see \cite[p. 99]{bart}). The relation between tree-forcings and Cohen reals has been rather extensively developed in the literature. The reason to study such connections for different types of tree-forcing notions was mainly to ``separate" different kinds of cardinal characteristics, in particular from $\cov(\meager)$. 
We can associate a tree-forcing $\poset{P}$ in a standard way with a notion of $\poset{P}$-nowhere dense sets, $\poset{P}$-meager sets and $\poset{P}$-measurable sets. 
\begin{definition}\label{meager}
\item Given $\poset{P}$ a tree-forcing notion and $X\subseteq \eta^\kappa$ a set of $\kappa$-reals, we say that: 
\begin{itemize}
\item $X$ is \emph{$\poset{P}$-nowhere dense} if 
\[
\forall p \in \poset{P} \exists q \leq p ([q] \cap X=\emptyset),
\]
and we put $\ideal{N}_\poset{P}:= \{ X: X \text{ is $\poset{P}$-nowhere dense} \}$.

\item $X$ is \emph{$\poset{P}$-meager} if there are $A_i \in \ideal{N}_\poset{P}$ such that $X \subseteq \bigcup_{i \in \kappa} A_i$, and we put 
$\ideal{I}_\mathbb{P}=\{ X: X \text{ is $\poset{P}$-meager}  \}$. 

\item $X$ is \emph{$\poset{P}$-measurable} if 
\[
\forall p \in \poset{P} \exists q \leq p ([q] \cap X \in \ideal{I}_\poset{P} \vee [q] \setminus X \in \ideal{I}_\poset{P}).
\]
\item A family $\Gamma$ of subsets of $\kappa$-reals is called \emph{well-sorted} if it is closed under continuous pre-images. We abbreviate the sentence ``every set in $\Gamma$ is $\mathbb{P}$-measurable" by $\Gamma(\mathbb{P})$. 
\end{itemize}
\end{definition}

For example when $\mathbb{P}$ is the Cohen forcing $\cohen$, then $\mathbb{C}$-meagerness coincides with topological meagerness and $\mathbb{C}$-measurability coincides with the Baire Property. When $\mathbb{P}$ is the Random forcing $\mathbb{B}$, then $\mathbb{B}$-meagerness coincides with Lebesgue measure zero and $\mathbb{B}$-measurability coincides with Lebesgue measurability.\\
The presence of Cohen reals added by a tree-forcing $\poset{P}$ has an impact both on the structure of $\ideal{I}_\poset{P}$ and on the corresponding notion of $\poset{P}$-measurability, as specified in the tables introduced below. 
More specifically, if $\poset{P}$ adds a Cohen real then the way of coding the $\poset{P}$-generic into a Cohen real often induces a construction providing $\Gamma(\poset{P}) \Rightarrow \Gamma(\cohen)$ (e.g., see \cite[Theorem 3.1]{BL99} where such a connection is shown in case of $\poset{P}=\poset{D}$). Moreover the presence of a coded Cohen real often implies that $\ideal{N}_\poset{P}$ and $\ideal{I}_\poset{P}$ do not coincide. For instance, this holds for the Hechler forcing $\poset{D}$ and for the eventually different forcing $\poset{E}$. Both these forcings are ccc, and indeed $\sigma$-centered. So, a natural question that arises is whether one can find a non-ccc tree-forcing notion $\poset{P}$ for which $\Gamma(\poset{P}) \Rightarrow \Gamma(\cohen)$ and $\ideal{I}_\poset{P} \neq \ideal{N}_\poset{P}$. In this paper we give a positive answer, by defining and analysing a variant of Mathias forcing in the space $3^\omega$ instead of $2^{\omega}$. 

As a more general question, for a tree-forcing $\poset{P}$, one can consider the four properties mentioned so far, namely: 1) $\poset{P}$ adds Cohen reals; 2) $\Gamma(\poset{P})\Rightarrow \Gamma(\cohen)$; 3) $\ideal{I}_\poset{P} \neq \ideal{N}_\poset{P}$; 4) $\poset{P}$ is ccc.
So for instance, if we consider the most popular tree-forcings we get the following table, where $\poset{T}$ stands for the variant of Mathias forcing defined in Section \ref{section2}, and $\miller^{\text{full}}$ is the variant of Miller forcing where we require that every splitting node splits into the whole $\omega$. The results in Table 1 without an explicit reference are deemed as folklore. 

\vspace{3mm}
\begin{center}
\begin{tabular}{|c|c|c|c|c|}\hline
\multicolumn{5}{|c|}{\textbf{Table 1}} \\ \hline	
 & Adding Cohen  & $\mathcal{I}_\mathbb{P}\neq \mathcal{N}_\mathbb{P}$ & $\Gamma(\mathbb{P})\Rightarrow \Gamma(\mathbb{C})$ & c.c.c\\ \hline
$\mathbb{D}$, $\mathbb{E}$ & \cmark	 & \cmark & \cmark (\cite[Theorem 3.1]{BL99} ) & \cmark  \\ \hline
$\random$ & \xmark	 & \xmark & \xmark (\cite{Sh85}) & \cmark \\ \hline
$\mathbb{V}$, $\miller$, $\poset{R}$ & \xmark	 & \xmark & \xmark &  \xmark \\ \hline
$\poset{T}$ & \cmark (Lemma \ref{t adds cohen reals})	 & \cmark (Lemma \ref{t^0 T^0}) & \cmark (Proposition \ref{prop-mathias-baire}) & \xmark  \\ \hline
$\poset{M}^{\text{full}}$ & \cmark	 & \xmark & \cmark (\cite[Theorem 3.4]{KL2018}) & \xmark  \\ \hline
\end{tabular}
\end{center}
\text{ }\\\\

Note that the table above refers to the tree-forcings in the $\omega$-case, and so defined on spaces like $2^\omega$, $\omega^\omega$ or $[\omega]^\omega$. 

For $\kappa > \omega$ we could consider the same table, but then the situation changes and we can get several different developments. We always assume $\kappa^{<\kappa}=\kappa$.
\begin{enumerate}
\item For $\poset{D}_\kappa$ (and similarly for $\poset{E}_\kappa$), the constructions done for the $\omega$-case (e.g., the proof of \cite[Theorem 3.1]{BL99}) easily generalises; 
\item for the $\kappa$-Silver forcing, the situation seems to depend on whether $\kappa$ is inaccessible or not; but it is rather independent of whether we consider club splitting or other version of $<\kappa$-closure;
\item for $\kappa$-Mathias forcing, the situation is drastically different from the $\omega$-case, as we can prove a strict connection with the Baire property and Cohen reals;
\end{enumerate}

The table for $\kappa$ uncountable then appears as follows, where $\kappa$ denotes any cardinal, $\lambda$ any inaccessible cardinal and $\gamma$ any not inaccessible cardinal:

\vspace{3mm}
\begin{center}
\begin{tabular}{|c|c|c|c|c|}\hline
\multicolumn{5}{|c|}{\textbf{Table 2}} \\ \hline	
 & Adding Cohen  & $\mathcal{I}_\mathbb{P}\neq \mathcal{N}_\mathbb{P}$ & $\Gamma(\mathbb{P})\Rightarrow \Gamma(\mathbb{C})$ & $\kappa^+$-c.c\\ \hline
$\mathbb{D}_\kappa$, $\mathbb{E}_\kappa$ & \cmark (Definition 48 \cite{BBFM})	 & \cmark & \cmark (Reamrk \ref{reamrk-kappa-hechler}) & \cmark  \\ \hline
$\poset{M}^\club_\kappa$ & \cmark (Proposition 77 \cite{BBFM})	 & \xmark (Lemma 3.8. \cite{FKK16}) & \cmark & \xmark  \\ \hline
$\silver^\club_\lambda$ & \xmark	 & \xmark & \xmark (Theorem 4.11. \cite{FKK16})&  \xmark \\ \hline
$\silver^\club_\gamma$ & ? & ? & ? &  \xmark \\ \hline
$\poset{R}^\club_\kappa$ & \cmark (Remark 30 \cite{Lag16})	 & \cmark(Lemma 4.1. \cite{FKK16}) & \cmark &  \cmark  \\ \hline
$\poset{R}_\kappa$ & \cmark (Remark 30 \cite{Lag16})	 & \cmark (Lemma \ref{lemma:kappa-mathias-ideal}) & \cmark (Proposition 31 \cite{Lag16}) &  \xmark \\ \hline
\end{tabular}
\end{center}
\text{ }\\\\

\paragraph{Basic notions and definitions}

The elements in $\eta^\kappa$ are called $\kappa$-reals or $\kappa$-sequences, where $\eta$ is also a regular cardinal, usually $\eta=2$ or $\eta=\kappa$.  Given $s,t \in \eta^{<\kappa}$ we write $s \perp t$ iff neither $s \subseteq t$ nor $t \subseteq s$ (and we say $s$ and $t$ are incompatible).
The following notations are also used.
\begin{itemize}

\item A \emph{tree} $p \subseteq \eta^{<\kappa}$ is a subset closed under initial segments and its elements are called \emph{nodes}. We consider $< \kappa$-closed trees $p$, i.e., for every $\subseteq$-increasing sequence of length $<\kappa$ of nodes in $p$, the supremum (i.e., union) of these nodes is still in $p$. Moreover, we abuse of notation denoting by $|t|$ the ordinal $\dom(t)$. 
\item We say that a $<\kappa$-closed tree $p$ is \emph{perfect} iff for every $s \in p$ there exists $t \supseteq s$ and $\alpha, \beta \in \eta$, $\alpha \neq \beta$, such that $t^\conc \alpha \in p$ and $t^\conc \beta \in p$; we call such $t$ a \emph{splitting node} (or \emph{splitnode}) and set $\splitting(p):=\{ t \in p: t \text{ is splitting} \}$.
\item 
We say that a splitnode $t \in p$ has \emph{order type} $\alpha$ (and we write $t \in \splitting_\alpha(p)$) iff $\operatorname{ot}(\{ s \in p: s \subsetneq t \land s \in \splitting(p) \},\subsetneq)=\alpha$.
\item $\stem(p)$ is the longest node in $p$ which is compatible with every node in $p$; $p \restric t:= \{ s \in p: s \text{ is compatible with } t \}$.
 \item $[p]:= \{ x \in \eta^\kappa: \forall \alpha < \kappa (x \restric \alpha \in p) \}$ is called the \emph{set of branches} (or \emph{body}) of $p$. 
\item $\successor(t,p):= \{ \alpha \in \eta: t^\conc \alpha \in p \}$, for $t \in p$.
\item A poset $\poset{P}$ is called \emph{tree-forcing} if its conditions are perfect trees and for every $p \in \poset{P}$, and every $t \in p$, one has $p \restric t \in \poset{P}$ too.
\end{itemize}

\begin{remark}
When comparing different notions of $\poset{P}$-measurablity, i.e., investigating the relationship between $\Gamma(\poset{P})$ and $\Gamma(\poset{Q})$ for different tree-forcings $\poset{P}$ and $\poset{Q}$, we often refer to different topological spaces. As Brendle pointed out explicitly in \cite{Bre95} the idea is to consider the analogue versions in the space of strictly increasing sequences $\omega^{\uparrow \omega}$ which can be seen to be almost isomorphic to the spaces we deal with (for the details see paragraph 1.2 in \cite{Bre95}). The only case that is not covered in \cite{Bre95} is $3^\omega$. In this paper we need to implement this case as well, as we are going to work with it in the coming section. Actually in trying to describe a suitable isomorphism, we need to consider a special subspace, in the same fashion as we do when we consider only the subspace of $2^\omega$ consisting of binary sequences that are not eventually 0. Analogously we consider $H:= \{ x \in 3^\omega: \exists^\infty n (x(n)=2) \}$ and we define the appropriate map $\varphi: H \rightarrow \omega^{\uparrow\omega}$ as follows: we fix the lexicographic enumeration $b: 2^{<\omega} \rightarrow \omega$. So $b(s)\leq b(t)$, whenever $s\subseteq t$ and in particular $b(\langle \rangle) = 0$. For every $x \in H$ let $\{ n_k: k \in \omega \}$ enumerate the set of all inputs $n$ such that $x(n)=2$. Then define $\sigma^x_0:=  \langle x(i): 0 \leq i < n_{0} \rangle$ and for every $j \in \omega$, $\sigma^x_{j+1}:= \langle x(i): n_j < i < n_{j+1} \rangle$. Finally put 
\[
\varphi(x):=  \langle b(\sigma^x_0), b(\sigma^x_0) + b(\sigma^x_1) + 1, b(\sigma^x_0) + b(\sigma^x_1) + b(\sigma^x_2) + 2, \dots \rangle = \langle \sum_{i\leq n} b(\sigma^x_i) + n: n \in \omega \rangle.
\]
One can easily check that $\varphi$ is an isomorphism.
\end{remark}

\section{A variant of Mathias forcing} \label{section2}
\begin{definition}\label{variant of mathias forcing}
We define $\mathias$ as the tree-forcing consisting of perfect trees $p \subseteq 3^{<\omega}$ with $A_p \subseteq \omega$ such that:
\begin{itemize}
\item for every $t \in p$ $(|t| \in A_p \ifif t \in \splitting(p))$, we refer to $A_p$ as the set of splitting levels of $p$;
\item if $t \in \splitting(p)$, then $t$ is fully splitting (i.e., for every $i \in 3$, $t^\conc i \in p$);
\item for every $s \supseteq \stem(p)$, if $s \notin \splitting(p)$ then $s^\conc 2 \notin p$;
\item for every $s,t \in p$, $|s|=|t|$, $s,t \notin \splitting(p)$, one has 
\[
\forall i \in 2 (s^\conc i \in p \ifif t^\conc i \in p).
\]
\end{itemize}
 \end{definition}
 
Intuitively, any condition $p\in \mathbb{T}$ is a perfect tree in $3^{<\omega}$ such that at any level $n\in \omega$ either $p$ uniformly splits, or uniformly takes the same value.

Note that $\mathbb{T}$ is not $c.c.c.$.
To show that let $E\subseteq \omega$ be the set of even numbers and $O= \omega \setminus E$. For each $a\subseteq O$ we define a condition $p_a\in \mathbb{T}$ in the following way: on even levels we uniformly split and on odd levels $n$ we uniformly choose the value $1$ whenever $n\in a$ and $0$ otherwise, so
$$p_a := \{ t\in 3^{<\omega} \such \forall n\in O \cap |t|\; ( (n \in a \rightarrow  t(n)=1)\wedge (n\not\in a \rightarrow t(n)=0 ))  \}.$$ 
We claim that $\{p_a \such a\subseteq O\}$ is an antichain. In fact, let  $a, b \subseteq O$ be two different subsets and fix $n\in O$ such that $n\in a\setminus b$ or $n\in b\setminus a$. W.l.o.g. assume $n\in a\setminus b$. Then each branch $x$ through $p_a$ must satisfy $x(n)=1$, whereas each branch $y$ through $b$ satisfies $y(n)=0$. Thus $[p_a]\cap [p_b]= \emptyset$ and in particular $p_a\perp p_b$.

\vspace{0.5cm}

Under a certain point of view $\mathias$ seems to behave like the original Mathias forcing $\poset{R}$. For instance, the following proof showing that $\mathias$ satisfies Axiom A follows the same  line as for $\poset{R}$. However, going more deeply one has to be careful, as even if $\mathias$ still satisfies quasi pure decision (Lemma \ref{quasi-pure}), it fails to satisfy pure decision (Lemma \ref{t adds cohen reals}). Thus, we examine these proofs in closer detail to better understand the main differences between $\mathias$ and $\poset{R}$.  

\begin{proposition}
$\mathias$ satisfies Axiom A.
\end{proposition}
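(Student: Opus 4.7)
The plan is to verify Baumgartner's four clauses using the fusion ordering
\[
q \leq_n p \iff q \leq p \text{ and } \splitting_i(q) = \splitting_i(p) \text{ for all } i < n.
\]
Observe that Definition \ref{variant of mathias forcing} forces every node on a splitting level of $p$ to be splitting, so $\splitting_i(p)$ is precisely the level-set of $p$ at the $(i+1)$-st smallest element of $A_p$; in particular $|\splitting_n(p)| = 3^n$. Clauses (A1) $\leq_0 = \leq$ and (A2) $\leq_{n+1} \subseteq \leq_n$ are then immediate.

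For the fusion clause (A3), given $p_0 \geq_0 p_1 \geq_1 p_2 \geq_2 \dots$, I would set $p := \bigcap_n p_n$. Since $\splitting_i(p_m)$ stabilizes for $m \geq i+1$ to some common value located on a common level $\alpha_i$, I would check the clauses of Definition \ref{variant of mathias forcing} for $p$ one at a time: $A_p = \{\alpha_i : i \in \omega\}$ is infinite and comprises exactly the splitting levels of $p$; every splitting node at $\alpha_i$ is fully splitting because it is so already in $p_{i+1}$ and each later $p_m$ preserves that level; the clauses $s\conc 2 \notin p$ and uniformity on non-splitting levels are inherited level-by-level from the $p_m$, since every individual level stabilizes after finitely many stages. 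This gives $p \in \mathbb{T}$ and $p \leq_n p_n$ for all $n$.

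The main work is clause (A4). Fix $p$, $n$ and a maximal antichain $C$ below $p$, and set $D := \{r \in \mathbb{T} : \exists c \in C\ (r \leq c)\}$, an open dense set below $p$. Enumerate $\splitting_n(p) = \{t_0,\dots,t_{3^n-1}\}$ and build recursively $p = q_0 \geq_{n+1} q_1 \geq_{n+1} \dots \geq_{n+1} q_{3^n} =: q$ so that $q_i \restric t_j \in D$ for all $j < i$. At stage $i+1$ density gives some $r \leq q_i \restric t_{i+1}$ with $r \in D$; I would then define $q_{i+1}$ by keeping the first $n+1$ splitting levels of $q_i$ unchanged and, above them, imposing the splitting set $A_r$ and the non-splitting values supplied by $r$ uniformly across every branch through each $t_j$. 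The previously secured memberships $q_i \restric t_j \in D$ for $j \leq i$ survive because $q_{i+1} \restric t_j \leq q_i \restric t_j$ and $D$ is open. Finally, any $c \in C$ compatible with $q$ must be compatible with some $q \restric t_j$; as $C$ is an antichain, $c$ coincides with the witness $c_j \in C$ given by $q \restric t_j \leq c_j$, so at most $3^n$ elements of $C$ are compatible with $q$, and in particular clause (A4) holds since $q \leq_n p$.

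The main obstacle is the copying step inside the recursion: one must ensure that the splitting set $A_r$ and the non-splitting values chosen to accommodate $t_{i+1}$ can be simultaneously imposed above every other $t_j$. This is precisely what the uniformity clauses of Definition \ref{variant of mathias forcing} are designed to allow, since splitting levels and non-splitting values depend only on the level and never on the particular $t_j$ one extends. The openness of $D$ then preserves the gains of earlier stages, and the recursion terminates in finitely many steps, so no Ramsey-theoretic ingredient, of the kind required for classical Mathias forcing, enters.
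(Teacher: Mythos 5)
Your fusion ordering, the existence of fusions, and the final counting argument are all fine, and the overall shape (a finite recursion over a level of the tree, copying a witness from $D$ uniformly above that level) is the same as the paper's. But there is a genuine gap in your clause (A4): you run the recursion over the $3^n$ splitting nodes $t_0,\dots,t_{3^n-1}\in\splitting_n(p)$ and simultaneously demand $q_{i+1}\leq_{n+1}q_i$, which forces every $t_j$ to remain fully splitting in $q_{i+1}$, \emph{and} $q_{i+1}\restric t_{i+1}\in D$. These two requirements are jointly unachievable in general. The witness $r\leq q_i\restric t_{i+1}$ with $r\in D$ will typically have $\stem(r)\supsetneq t_{i+1}$, so $r$ contains only one of the three immediate successors of $t_{i+1}$; any $q_{i+1}$ that keeps all three successors of $t_{i+1}$ then satisfies $q_{i+1}\restric t_{i+1}\not\leq r$, and openness of $D$ gives you nothing. (The translated copies of $r$'s structure above the other two successors need not lie below any member of the antichain.) This is not a removable technicality: the paper's Lemma \ref{t adds cohen reals} shows $\mathias$ fails pure decision, which is precisely the statement that one cannot in general get into a dense set without properly extending the stem past the next splitting node.

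The repair is what the paper does: index the recursion by the $3^{n+1}$ \emph{immediate successors} of the nodes in $\splitting_n(p)$, i.e.\ the nodes of length $n_k+1$ where $n_k$ is the $(k{+}1)$-st splitting level. For such a $t_j$, one finds $p_j\in D$ with $p_j\leq q_j\restric t_j$ and copies $p_j$ only at coordinates strictly above $n_k$, which leaves all splitting levels $\leq n_k$ untouched (so the fusion ordering is respected) while genuinely achieving $q\restric t_j\leq p_j$. Your concluding count then goes through verbatim with $3^{n+1}$ in place of $3^n$: every $c\in C$ compatible with $q$ is compatible with some $q\restric t_j\leq p_j\leq c_j$, hence equals $c_j$, so at most $3^{n+1}$ members of $C$ are compatible with $q$. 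With that one-level shift your argument matches the paper's proof.
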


\begin{proof}
We define the partial orderings $\langle\leq_n \such n\in \omega\rangle$ in the expected way: For $p,q\in \mathbb{T}$ we put $q\leq_n p$ if and only if $q\leq p$ and the two sets of splitting levels $A_q$ and $A_p$ coincide on the first $n+1$ elements. So, in particular $q\leq_0 p$ implies $\stem(q)=\stem(p)$. It is easy to check that fusion sequences exist. Let $p\in \mathias$, $k\in \omega$ and $D\subseteq \mathias$ a dense subset be given. We show that there is a stronger condition $q\leq_k p$ and a finite set $E\subseteq D$ pre dense below $q$. This proves that $\mathias$ satisfies Axiom A. Let $A_p=\{n_i \such i<\omega  \}$ be an increasing enumeration of the splitting levels of $p$. Observe that there are exactly $3^k$ nodes $t\in p$ of length $n_k$. Each of those nodes is splitting, so that there are exactly $3^{k+1}$ immediate successor-nodes. Let $\{t_i \such i<3^{k+1} \}$ enumerate all nodes $t\in p$ of length $n_k +1$. We construct $q\leq_k p$ together with a decreasing sequence $p=q_0\geq q_1\geq ... \geq q_{3^{k+1}}=q$. Assume we want to construct $q_{j+1}$. Find $p_j\in D $ so that $p_j \leq q_j \restric t_j$ (this is always possible since $D$ is dense). We define $q_{j+1}$ to be the condition which is obtained from $q_j$, by copying $p_j$ above each node in $q_j$ of length $n_k + 1$. 
 More precisely:
\begin{align*}
q_{j+1} := \{t\in q_j \such  (|t|\leq n_{k}+1 \vee & (|t|> n_{k}+1 \wedge \exists s \in p_j \; \forall n\in \omega  \\
& (n_{k}< n < |t| \rightarrow s(n)=t(n)) )) \}.
\end{align*}

It follows from the construction that for $q:= q_{3^{k+1}}$ and $j<3^{k+1}$ we must have $q\restric t_j \leq p_j$. In particular, we have that $q \leq _k p$. Put $E:= \{p_j \such j<3^{k+1}\}$. We want to check that $E$ is pre dense below $q$. Therefore, let $r\leq q$ be given. Then there is $j<3^{k+1}$ such that $r\restric t_j \leq q\restric t_j$. But also $q\restric t_j \leq p_j\in E$ and so $r$ and $p_j$ are compatible via $r\restric t_j$.  
\end{proof}

\begin{lemma} \label{quasi-pure}
$\mathias$ satisfies quasi pure decision, i.e., for every open dense $D \subseteq \mathias$, $p \in \mathias$, there is $q \leq_0 p$ satisfying what follows: if there exists $q' \leq q$ such that $q' \in D$, then $q \restric \stem(q') \in D$ as well.    
\end{lemma}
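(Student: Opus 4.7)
The plan is to adapt and iterate the ``copy the shape'' construction from the preceding Axiom A proof into an infinite fusion that decides $D$ above every possible stem extending $\stem(p)$. The key structural feature of $\mathias$ I will exploit is that every $r \in \mathias$ factors as the pair $(\stem(r), \text{shape}(r))$, where the shape consists of the set $A_r$ of splitting levels together with the uniform values $f_r$ prescribed at the non-splitting levels above $\stem(r)$; by the uniformity clause (fourth bullet of Definition~\ref{variant of mathias forcing}), this shape can be transplanted above any node and still yields a valid element of $\mathias$.

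I will build a fusion sequence $p = p_{-1} \geq_0 p_0 \geq_1 p_1 \geq_2 \cdots$ with $p_k \leq_k p_{k-1}$ and the inductive property that, after step $k$, every splitting node $t$ of $p_k$ of order $\leq k$ is \emph{decided}, meaning either (a) $p_k \restric t \in D$, or (b) no $r \leq p_k \restric t$ with $\stem(r) = t$ lies in $D$. Since $\leq_k$ pins down the first $k+1$ splitting levels, the splitting nodes of order $\leq k$ of $p_k$ coincide with those of every later $p_j$ and of the fusion limit. Step $k$ itself processes the $3^k$ nodes of $\splitting_k(p_{k-1})$ by sub-steps $p_{k-1} = p^{(0)} \geq p^{(1)} \geq \cdots \geq p^{(3^k)} = p_k$: enumerate these nodes as $t^{(1)}, \ldots, t^{(3^k)}$, and at sub-step $i$ ask whether some $r \leq p^{(i-1)} \restric t^{(i)}$ with $\stem(r) = t^{(i)}$ lies in $D$; if so, define $p^{(i)}$ from $p^{(i-1)}$ by replacing the shape above the $(k+1)$-st splitting level with the shape of such an $r$ (using exactly the formula of the Axiom A proof), so that $p^{(i)} \restric t^{(i)} = r \in D$; if not, set $p^{(i)} = p^{(i-1)}$.

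Taking $q := \bigcap_k p_k$ yields a valid $\mathias$-condition with $\stem(q) = \stem(p)$, so $q \leq_0 p$. For any $q' \leq q$ in $D$, the node $t := \stem(q')$ satisfies $|t| \in A_{q'} \subseteq A_q$, hence $t$ is a splitting node of $q$ and therefore lies in some $\splitting_k(p_{k-1})$ at order $\leq k$. Since $q' \leq q \restric t$ has stem $t$, option (b) cannot have been chosen when $t$ was processed, so option (a) was taken, $p_k \restric t \in D$, and by openness of $D$ also $q \restric t \in D$, which is exactly the conclusion of quasi pure decision.

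The main delicate point is to verify that the later refinements (the subsequent sub-steps within step $k$ and all of steps $>k$) do not spoil the decisions already made. This is where openness is essential: any earlier (a)-decision $p^{(\text{old})} \restric t \in D$ survives because any later $p^{(\text{new})} \restric t$ is still $\leq p^{(\text{old})} \restric t$ and $D$ is downward closed, while any earlier (b)-decision survives because a hypothetical $r \leq p^{(\text{new})} \restric t$ in $D$ with $\stem(r)=t$ would also refine $p^{(\text{old})} \restric t$ with stem $t$, contradicting (b). The same downward-closure argument carries all decisions from each $p_k$ to the fusion limit $q$.
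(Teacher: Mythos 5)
Your proposal is correct and takes essentially the same route as the paper's proof: the same fusion sequence in which, at stage $k$, each of the $3^k$ splitting nodes $t$ of order $k$ is processed in sub-steps by either transplanting the shape of a witness $r \in D$ with $\stem(r)=t$ (the paper's Case 2) or doing nothing (Case 1), with openness of $D$ used to carry the decisions through later sub-steps and to the fusion limit $q$. Your explicit ``decided'' invariant and persistence check merely spell out what the paper handles implicitly in its final verification via $q \restric t \leq q^{j+1}_k \restric t$ and openness.
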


\begin{proof}
Let $p\in \mathbb{T}$ and $D\subseteq \mathbb{T}$ open dense be given. We construct a fusion sequence $p=q_0\geq_0 q_1 \geq_1 ... $ such that the fusion $q=\bigcap_k q_k$ witnesses quasi pure decision. Assume we are at step $k+1$ of the construction i.e. we have already constructed $q_{k}$. Let $A_{q_{k}}= \{ n_i \such i\in \omega \}$ be the corresponding set of splitting levels. Let $\{t_j \in q_{k} \such j\in 3^k \}$ enumerate all nodes in $q_k$ of length $n_k$. Similar to above we construct a decreasing sequence $q_{k} = q^0_{k} \geq q^1_{k} \geq ... \geq q^{3^k}_{k}$. Assume we are at step $j<3^k$. There are two cases:\\
\textit{Case 1}: There is no stronger condition $p' \leq q^j_{k}$ in $D$ with $\stem (p') = t_j$. Then do nothing and put $q^{j+1}_{k}:=q^{j}_{k}$.\\
\textit{Case 2}: Otherwise there is a $p' \leq q^j_{k}$ in $D$ with $\stem (p') = t_j$. As in the proof above we define
\begin{align*}
q^{j+1}_k := \{t\in q^j_k \such  (|t|\leq n_{k}+1 \vee & (|t|> n_{k}+1 \wedge \exists s \in p' \; \forall n\in \omega  \\
& (n_{k}< n < |t| \rightarrow s(n)=t(n)) )) \};
\end{align*}
specifically $q^{j+1}_{k} \restric t_j = p'$. Finally defining $q_{k+1} := q^{3^k}_k$, we get that the corresponding two sets of splitting levels $A_{q_{k}}$ and $A_{q_{k+1}}$ coincide on the first $k+1$ elements and therefore $q_{k+1} \leq_{k} q_{k}$. This completes the construction.\\
Before showing that the fusion $q := \bigcap_k q_k$ witnesses quasi pure decision we make the following observation: Since in the $(k+1)$-th step in the construction of the fusion the $k$-th splitting level is fixed, we know for each $k \in \omega $ and $l> k$ that $q\leq_k q_l $. Therefore the two sets of splitting levels $ A_q$  and $A_{q_l}$ coincide on the first $l$ elements.\\ 
Now let $q' \leq q$ in $D$ be given. Put $t:=\stem (q')$. Again we denote the splitting levels of $q$ by $A_q = \{ n_k \such k\in \omega\}$ and take $n_{k}$ such that $|t| = n_{k}$. We look at the construction of $q_{k+1}$. Then there is $j<3^k$ with $t_j =t$. Since $q' \leq q \leq q^j_k$ and $q'\in D$ we know that in the construction of $q^{j+1} _k$ case 2 was applied i.e. $q^{j+1}_k\restric t =p'$ for some $p'\in D$. Thus, using openness of $D$ and $ q\restric t \leq q^{j+1}_k\restric t$, we also get $q\restric t\in D$.
\end{proof}

\begin{lemma}\label{t adds cohen reals} \
\begin{enumerate}
\item $\mathias$ does not satisfy pure decision.
\item $\mathias$ adds Cohen reals.
\end{enumerate}
\end{lemma}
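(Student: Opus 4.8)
The plan is to prove both parts together, since the failure of pure decision and the addition of a Cohen real are two sides of the same phenomenon: at a splitting level of a condition $p \in \mathias$, the tree is \emph{fully} splitting into all three values $\{0,1,2\}$, and in particular the value $2$ is available. The point is that whether a node takes the value $2$ or not is genuinely undecided in a way that propagates a Cohen-generic bit sequence. First I would recall the definition of pure decision: a forcing satisfies pure decision if for every $p$ and every statement $\phi$ in the forcing language there is $q \leq_0 p$ (so $\stem(q) = \stem(p)$) deciding $\phi$. To show $\mathias$ fails this, I would fix a name for the ``value at the first splitting level above the stem'' and argue that no $q \leq_0 p$ can decide, say, whether that value is $2$, because below any such $q$ the splitting node still splits fully into $0$, $1$ and $2$; passing to $q \restric (\stem(q)^\conc 2)$ forces the value to be $2$ while $q \restric (\stem(q)^\conc 0)$ forces it to be $0$, and both are legitimate extensions with the \emph{same} restriction on the stem as required.

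Next, for part (2), I would define the Cohen real explicitly from the generic. The natural candidate, given the structure in the definition and the map $\varphi \colon H \to \baire$ described in the Remark, is the real recording at each splitting level of the generic branch whether the chosen value is $2$ or not — equivalently, reading off the positions where the generic takes value $2$. More concretely, let $x_{\mathrm{gen}} \in 3^\omega$ be the generic branch, and define a real $c \in 2^\omega$ (or in $\baire$) by recording the ``$2$ versus non-$2$'' pattern, i.e. the characteristic information of $\{ n : x_{\mathrm{gen}}(n) = 2 \}$, possibly composed with $\varphi$ to land in $\baire$. I would then show $c$ is Cohen over the ground model by a standard density argument: given a ground-model open dense set $D \subseteq \cohen$ (thought of as dense in $2^{<\omega}$ or $\bairefin$) and a condition $p$, I can extend $p$ to some $q$ whose stem, read through the coding, forces a finite initial segment of $c$ into a given element of $D$. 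The key mechanism is that because splitting is \emph{full} (value $2$ always permitted at splitting nodes), I have complete freedom to drive the $2$-pattern of the stem wherever I like, so every finite binary string is attainable as an initial segment of $c$ below a suitable extension of any given condition.

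The main obstacle I anticipate is the bookkeeping that makes the coding map actually continuous and the genericity argument clean, namely verifying that the finite approximations to $c$ are determined by the stem of the forcing condition and that extending the stem to hit a prescribed finite pattern can always be done while staying inside $\mathias$ (respecting the third and fourth clauses of Definition \ref{variant of mathias forcing}, which constrain how non-splitting nodes behave). I would argue that the uniformity clauses are not an obstruction: on non-splitting levels the tree is forced to take a single value in $\{0,1\}$ uniformly, which is exactly what is needed for the coding to be well-defined on branches, and on splitting levels I retain the free choice of $2$. The cleanest route is probably to show directly that the map sending a condition $p$ to the finite $0/1/2$-pattern of $\stem(p)$ (restricted to its $2$-entries) defines a projection onto $\cohen$-conditions, from which adding a Cohen real follows by the standard criterion that a dense embedding or projection into $\cohen$ yields a Cohen real. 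I would close by noting that part (1) is in fact a corollary of the Cohen-real construction, since a forcing adding a Cohen real cannot have pure decision for the name of that Cohen real's first bit, but I would keep the two arguments separate for clarity.
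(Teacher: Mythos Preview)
Your approach to part (1) is correct and in fact simpler than the paper's: the sentence ``$\dot z(|\stem(p)|)=2$'' cannot be decided by any $q\leq_0 p$, since $\stem(q)=\stem(p)$ is still a full splitting node of $q$, so both $q\restric(\stem(q)^\conc 2)$ and $q\restric(\stem(q)^\conc 0)$ are legitimate extensions forcing opposite answers. The paper instead uses a parity statement about the number of $1$'s between consecutive $2$'s in the generic; this is more elaborate, but it has the virtue of directly setting up the Cohen coding used in part (2).

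Part (2), however, has a genuine gap. Your proposed Cohen real is essentially the characteristic function $c$ of $\{n:\dot z(n)=2\}$, and you claim ``complete freedom to drive the $2$-pattern of the stem wherever I like''. This fails. If $q\leq p$ then every splitting level of $q$ is a splitting level of $p$, so $A_q\subseteq A_p$; by the third clause of Definition~\ref{variant of mathias forcing}, every level $m\geq|\stem(p)|$ with $m\notin A_p$ is forced by $p$ to satisfy $\dot z(m)\in\{0,1\}$, hence $c(m)=0$. For instance, the condition $p$ with empty stem, $A_p=\{2k:k\in\omega\}$, and value $0$ at all odd levels forces $c$ into the ground-model closed nowhere dense set $\{x\in 2^\omega:\forall k\;x(2k+1)=0\}$. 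So $c$ is not Cohen. Composing with the homeomorphism $\varphi$ from the Remark does not help: that map is a ground-model homeomorphism, so it cannot turn a non-Cohen branch into a Cohen one.

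The paper's coding avoids this obstruction by reading off not the $2$-positions themselves but the \emph{parity of the number of $1$'s between consecutive $2$'s} in $\dot z$. The point is that at a splitting level one may choose $0$ or $1$ (thereby flipping or preserving the parity of the current block, independently of the fixed $\{0,1\}$-values already imposed at non-splitting levels) and then at the next splitting level choose $2$ to close the block. Iterating through two splitting levels per target bit gives genuine freedom to realise any prescribed finite $\sigma\in 2^{<\omega}$ as the next segment of the coded real below any condition. Your overall framework (density argument via extending the stem) is fine; it is only the choice of coding that needs to be replaced.
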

\begin{proof} 

(1). We have to find a condition $p\in \mathias$ and a sentence $\varphi$ such that no $q\leq_0 p$ decides $\varphi$. We prove something slightly stronger: Given any $p\in \mathias$ we can find a sentence $\varphi_p$ such that there is no $q\leq_0 p$ deciding $\varphi_p$.\\
So let $p\in \mathias$ and $q\leq_0 p$  be given (i.e. $q\leq p \wedge\stem(p)=\stem(q)$). Let $\dot{z} $ be the $\mathias$-name for the generic real. It is clear that $\Vdash_\mathias \exists^\infty n \;\dot{z}(n)=2$. We can define a name $\dot{\sigma}_z\in \omega^\omega \cap V^\mathias$ such that
$$\Vdash_\mathias \dot{\sigma}_z (k)= k\text{-th }2 \text{ occurring in }\dot{z}.$$ 
This means that in any generic extension $V[z]$ the evaluation of $\dot{\sigma}_z$ enumerates the set $\{k\in \omega \such z(k)=2 \}\in V[z]$. For $k\in \omega $ we define 
$$\varphi_k := `` \text{there are even many } 1\text{'s occuring in } \dot{z} \text{ between } \dot{\sigma}_z(k) \text{ and } \dot{\sigma}_z(k+1)".$$
Put $k:= |\{n < |\stem(q)| \such \stem(q)(n)=2 \}|$ and let $n_0^q < n_1^q$ denote the first two splitting levels of $q$. Take $q_0,q_1 \leq q$ such that
\begin{enumerate}
\item $\stem(q_0)(n^q_0)=0$ and $\stem(q_0)(n^q_1)=2$,
\item $\stem(q_1)(n^q_0)=1$ and $\stem(q_1)(n^q_1)=2$.
\end{enumerate}
Then there are at least $k+1$ many $2$'s occurring in $\stem(q_i)$, therefore $\varphi_k$ is decided by $q_i, i\in2$ and we get 
$$q_0\Vdash \varphi_k \Leftrightarrow q_1 \Vdash \lnot \varphi_k.$$
This proves that $q$ does not decide $\varphi_k$.

\vspace{5mm}

(2). We now show with a similar idea that $\mathias$ adds Cohen reals. Again let $\dot{z}$ be the $\mathias$-name for the generic real and let $\dot{\sigma}_z$ be as above. For every $k \in \omega$, 
\begin{itemize}
\item $c(k)=0$ iff $|\{ i \in \omega: \dot{\sigma}_z(k) \leq i < \dot{\sigma}_z(k+1)\land \dot{z}(i)=1  \}|$ is even
\item $c(k)=1$ iff $|\{ i \in \omega:\dot{\sigma}_z(k) \leq i < \dot{\sigma}_z(k+1) \land \dot{z}(i)=1  \}|$ is odd.
\end{itemize}
Then $\Vdash_\mathias c\in 2^\omega$. We want to show that $c$ is Cohen. So fix $p \in \mathias$, $\sigma \in 2^{<\omega}$ and let $c_p \subseteq c$ be the part of $c$ decided by $p$. 
 We aim to find $q \leq p$ such that $q \force {c_p}^\conc \sigma \subseteq c$. This is sufficient to show that $c$ is Cohen. 

Let $k =|c_p|$, i.e. $k$ is minimal such  that $c(k)$ is not decided by $p$.  Define $p=q_0 \geq q_1 \geq \dots \geq q_{|\sigma|}$ by recursion as follows.

Assume we have constructed $q_j,j<|\sigma|$.  Let $n^j_0<n^j_1$ be the first two splitting levels of $q_j$. For $i\in 2$ take $t_i \in q_j$ of length $n^j_1+1$ so that $t_i(n_0^j)=i$ and $t_i(n^j_1)=2$. Put $q_j^i := q_j\restric t_i$. Then we must have  
\begin{align}
|\{m\in \omega \such n^j_0 \leq m < n^j_1 \wedge \stem(q_j^i)(m)=1  \}|=_{\mod 2} \sigma(j) \label{gl1}
\end{align}
for exactly one $i\in 2$. Let $q_{j+1} = q_j^i$ such that (\ref{gl1}) holds.\\
Then by construction, for every $j < |\sigma|$, $q_{|\sigma|} \force c(|c_p|+j) = \sigma(j)$, i.e., $q_{|\sigma|} \force {c_p}^\conc \sigma \subseteq c$.

\end{proof}

Before moving to the issue concerning the ideals $\ideal{I}_\mathias$ and $\ideal{N}_\mathias$, we have to clarify the space that we are interesting in working with. To understand the point let us consider the standard Mathias forcing $\poset{R}$. If we work in the Cantor space $2^\omega$ literally, then we end up with a trivial example to show that $\ideal{N}_\poset{R} \neq \ideal{I}_\poset{R}$, namely the set of ``rational numbers'', i.e., the set $Q:= \{ x \in 2^\omega \such \exists n \forall m \geq n (x(m)=0) \}$. In a similar fashion one can check that the sets $N_n := \{ x\in 3^\omega \such  x(i) \neq 2\;\forall i\geq n \}$ are $\mathias$-nowhere dense, but the union $\bigcup_{n \in \omega} N_n$ is not. We leave the straightforward proof to the reader.

For the same argument we specified in Remark 2, indeed the space we really refer to when we work with the standard Mathias forcing is not literally $2^\omega$, but is the subspace obtained via the identification of $[\omega]^\omega$ and $2^\omega$, i.e., the set $\{ x \in 2^\omega: \exists^\infty n (x(n)=1) \}$. In such a space the counterexample disappears and indeed we get $\ideal{I}_{\poset{R}}= \ideal{N}_{\poset{R}}$. 
The main difference we want to make is that $\poset{T}$ behaves completely differently. In fact even when we take the ``proper'' space $H:= \{x \in 3^\omega: \exists^\infty n (x(n)=2) \}$ we cannot show that $\ideal{N}_\mathias = \ideal{I}_\mathias$, as the following result highlights (where the ideals are considered in the space $H$).

\begin{lemma}\label{t^0 T^0} $\ideal{N}_\mathias \neq \ideal{I}_\mathias$. 
\end{lemma}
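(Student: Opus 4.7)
The plan is to exploit the Cohen-coding map $f : H \to 2^\omega$ implicit in the proof of Lemma \ref{t adds cohen reals}(2): for $x \in H$, $f(x)(k)$ records the parity of the number of $1$'s appearing in $x$ strictly between the $k$-th and the $(k+1)$-th occurrence of $2$. As a witness I take
\[
Y := \bigcup_{n \in \omega} Y_n, \qquad Y_n := \{\, x \in H : f(x)(m) = 0 \text{ for every } m \geq n \,\},
\]
and I show that each $Y_n \in \ideal{N}_\mathias$ while $Y \notin \ideal{N}_\mathias$; this yields $Y \in \ideal{I}_\mathias \setminus \ideal{N}_\mathias$ at once.

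For the first statement, given $p \in \mathias$ and $n \in \omega$, I reuse the fusion of Lemma \ref{t adds cohen reals}(2): that construction produces, for any prescribed $\sigma \in 2^{<\omega}$, an extension $q \leq p$ forcing ${c_p}^\conc \sigma \subseteq c$. Choosing $\sigma$ long enough to reach some index $m \geq n$ and with value $1$ at that index gives $q \leq p$ forcing $f(\cdot)(m) = 1$, so $[q] \cap Y_n = \emptyset$.

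For the second statement I prove the stronger claim that every $q \in \mathias$ admits a branch $x \in [q] \cap H$ with $f(x)$ eventually $0$, whence $[q] \cap Y \neq \emptyset$. Let $n_0 < n_1 < \cdots$ enumerate the splitting levels of $q$. By Definition \ref{variant of mathias forcing}, at every non-splitting level $m$ the tree $q$ forces a single uniform value $v_m \in \{0, 1\}$, while at each splitting level $n_i$ any value in $\{0, 1, 2\}$ is available. I build $x$ by extending $\stem(q)$, setting $x(n_{2k}) := 2$ for every $k$, filling every non-splitting level with the forced $v_m$, and choosing $x(n_{2k+1}) \in \{0, 1\}$ so as to cancel the parity of
\[
N_k := \bigl| \{\, m \in (n_{2k}, n_{2k+2}) \setminus A_q : v_m = 1 \,\} \bigr|.
\]
Then $x \in [q]$, $x$ has infinitely many $2$'s so $x \in H$, and between any two consecutive $2$'s of $x$ past $\stem(q)$ the total number of $1$'s is even; hence $f(x)$ is eventually $0$.

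The main delicate point is the parity-cancellation step: one must verify that the single free value $x(n_{2k+1}) \in \{0, 1\}$ genuinely controls the parity between $x(n_{2k}) = 2$ and $x(n_{2k+2}) = 2$, and that the resulting $x$ really traces a branch of $q$. Both facts follow immediately from the level-uniformity clause for non-splitting nodes in Definition \ref{variant of mathias forcing}, which makes $N_k$ well-defined and independent of the branch. The finitely many initial values of $f(x)$ coming from $2$'s that already sit inside $\stem(q)$ are beyond our control, but this is harmless since only the eventual vanishing of $f(x)$ matters.
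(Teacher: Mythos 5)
Your proof is correct and follows essentially the same route as the paper: your witness sets $Y_n$ are exactly the paper's $M_n$, nowhere-density of each piece is obtained by steering the coded Cohen real to take value $1$ at some index $\geq n$ (so that the relevant parity is fixed on the stem and hence on every branch), and the failure of nowhere-density for the union comes from building a branch of a given condition whose gaps between consecutive $2$'s eventually all contain an even number of $1$'s. The only difference is cosmetic: you spell out the parity-cancellation construction of that branch, which the paper states in a single sentence.
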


\begin{proof}
Given $z \in H$ consider $\sigma_z \in \omega^\omega$ as in the proof of the previous Lemma and also remind $c_z \in 2^\omega$ be as follows:
\begin{itemize}
\item $c_z(k)=0$ iff $|\{ i \in \omega: \sigma_z(k) \leq i < \sigma_z(k+1)\land {z}(i)=1  \}|$ is even
\item $c_z(k)=1$ iff $|\{ i \in \omega:{\sigma}_z(k) \leq i < {\sigma}_z(k+1) \land {z}(i)=1  \}|$ is odd.
\end{itemize}
Then define 
\[
M_n:= \{  z \in H \such \forall k \geq n (c_z(k)=0)\}. 
\]
We claim each $M_n$ is $\mathias$-nowhere dense, but $\bigcup_{n \in \omega}M_n$ is not. In fact given $n\in \omega$ and $p\in\mathias$ we can lengthen the stem of $p$ to get a stronger condition $p'\leq p$ such that $\{k<|\stem (p')| \such p'(k)= 2 \}$ has size $>n$. Let $A_{p'}:= \{n_i \such i\in \omega\}$. Now we take $t\in \splitting_2(p')$ extending $\stem(p')^\conc 2$ i.e., $t(n_0)=2$ such that $t(n_1)\neq 2$ and the set of $\{k > |\stem(p')| \such t(k)=1 \}$ is odd. Then $q:= p'\restric t^\conc 2$ has no common branch with $M_n$. On the other hand there is always a branch $z\in [p]\cap H$ such that for all $k>\stem (p)$, $c_z(k)=0$.
\end{proof}

\section{$\Gamma(\poset{P}) \Rightarrow \Gamma(\cohen)$}

We now prove a rather general result, showing how the ``Cohen coding'' allows us to prove a classwise connection between $\poset{P}$-measurability and Baire property. Beyond its own interest, the technique used will also permit us to apply it in other specific cases that we will summarize along the paper, in particular to answer a question connected to the diagram of regularity properties at uncountable investigated in \cite{FKK16}. Recall that a family of sets $\Gamma$ is \emph{well-sorted} if it is closed under continuous pre-images and $\Gamma(\poset{P})$ stands for ``every set in $\Gamma$ is $\poset{P}$-measurable''.

\begin{proposition}\label{gamma p implies gamma c}
Let $\mathcal{X}$ be a set of size $\leq \kappa$ endowed with the discrete topology, $\mathcal{X}^\kappa$ the topological product space equipped with the bounded topology (i.e., the topology generated by $[t]:=\{ x \in \mathcal{X}^\kappa: x \supseteq t \}$ with $t \in \mathcal{X}^{<\kappa}$), $\mathbb{P}$ be a $<\kappa$-closed tree-forcing notion defined on $\mathcal{X}^{<\kappa}$. Assume there exist two maps $\varphi:\mathcal{X}^\kappa \rightarrow 2^\kappa$ and $\varphi^*: \mathcal{X}^{<\kappa} \rightarrow 2^{<\kappa}$ such that:
\begin{enumerate}
\item[a)] $\varphi$ is continuous,
\item[b)] $\forall i< \kappa \; \varphi(x)\restric i = \varphi^*(x\restric i)$,
\item[c)] $\forall q\in \mathbb{P}\; \forall s\in 2^{<\kappa }\; \exists \sigma \in q $ such that $\varphi^*(\sigma) \supseteq \varphi^*(\stem(q))^\conc s.$  
\end{enumerate}
Then $\Gamma(\mathbb{P} )$ implies $\Gamma(\mathbb{C})$.
\end{proposition}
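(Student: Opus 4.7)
Let $A \in \Gamma$ and set $B := \varphi^{-1}(A)$. By continuity (a) and the well-sortedness of $\Gamma$, one has $B \in \Gamma$, so by $\Gamma(\poset{P})$ the set $B$ is $\poset{P}$-measurable. To show $A$ is $\cohen$-measurable I verify the density condition: for every $s \in 2^{<\kappa}$ there exists $t \supseteq s$ with $[t] \cap A \in \ideal{I}_\cohen$ or $[t] \setminus A \in \ideal{I}_\cohen$. Given such an $s$, I first produce $p \in \poset{P}$ with $\varphi^*(\stem(p)) \supseteq s$: starting from any $p_0 \in \poset{P}$ of empty stem (noting that (b) at $i=0$ forces $\varphi^*(\emptyset) = \emptyset$), apply (c) to $p_0$ and $s$ to obtain $\sigma \in p_0$ with $\varphi^*(\sigma) \supseteq s$, and set $p := p_0 \restric \sigma$. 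Then by $\poset{P}$-measurability of $B$ below $p$ fix $q \leq p$ with, without loss of generality, $[q] \cap B \in \ideal{I}_\poset{P}$, and set $t := \varphi^*(\stem(q))$; by (b) we have $t \supseteq s$.

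The heart of the argument is then the following \emph{pushforward lemma}: if $[q] \cap \varphi^{-1}(Y) \in \ideal{I}_\poset{P}$ for some $q \in \poset{P}$ and $Y \subseteq 2^\kappa$, then $Y \cap [\varphi^*(\stem(q))] \in \ideal{I}_\cohen$. Applying it with $Y = A$ closes the argument. To prove the lemma, cover $[q] \cap \varphi^{-1}(Y) \subseteq \bigcup_{i < \kappa} N_i$ with each $N_i \in \ideal{N}_\poset{P}$, and show that $\varphi(N_i \cap [q])$ is $\cohen$-nowhere-dense in $[\varphi^*(\stem(q))]$ for each $i$. The construction runs as follows: given any $u \supseteq \varphi^*(\stem(q))$ in $2^{<\kappa}$, apply (c) to find $\sigma \in q$ with $\varphi^*(\sigma) \supseteq u$; using that $N_i \in \ideal{N}_\poset{P}$, refine $q \restric \sigma$ to some $q' \in \poset{P}$ with $[q'] \cap N_i = \emptyset$; by (b), $u' := \varphi^*(\stem(q')) \supseteq u$ is the intended Cohen witness of the refinement.

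The main technical obstacle is that $\varphi^*$, although length-preserving by (b), is typically not injective on $\mathcal{X}^{<\kappa}$, so a $y \in [u']$ can have preimages in $[q] \cap N_i$ that lie outside $[q']$, and the naive conclusion $[u'] \cap \varphi(N_i \cap [q]) = \emptyset$ may fail. I expect this to be handled either by iterating the above refinement in a fusion-style argument across all length-$|u'|$ nodes $\tau \in q$ with $\varphi^*(\tau) = u'$, thereby absorbing all problematic preimages into a single refined $q'$, or equivalently by observing that $\varphi([q])$ is $\cohen$-comeager in $[\varphi^*(\stem(q))]$ and collecting the exceptional set $[\varphi^*(\stem(q))] \setminus \varphi([q])$ into one additional $\cohen$-meager piece. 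Either route yields $[t] \cap A \subseteq \bigcup_i M_i$ with each $M_i$ $\cohen$-nowhere-dense in $[t]$, finishing the proof.
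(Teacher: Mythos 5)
Your global structure matches the paper's: reduce Cohen-measurability of $A$ to the local meager/comeager dichotomy on basic clopen sets, and derive that dichotomy from a pushforward lemma applied to $B=\varphi^{-1}(A)$ and to its complement; your pushforward lemma is literally the complement form of the lemma the paper proves. The gap is in your proof of that lemma. The step ``$\varphi(N_i\cap[q])$ is $\cohen$-nowhere dense (or even just meager) in $[\varphi^*(\stem(q))]$ for each $N_i\in\ideal{N}_{\poset{P}}$'' is not merely hard to prove from (a)--(c): it is false, already in the paper's main application $\poset{P}=\mathias$ with the parity coding $\varphi$. Take $N:=\{x\in H\such \exists^\infty k\,(x(k)=x(k+1)=2)\}$. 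This $N$ is $\mathias$-nowhere dense: below any $p$ one may discard every other splitting level (choosing the value $0$ uniformly at the discarded levels), and since non-splitting levels past the stem never carry the value $2$, the resulting $q\leq p$ has no branch with infinitely many adjacent $2$'s, so $[q]\cap N=\emptyset$. Yet $\varphi(N)\supseteq\{y\in 2^\omega\such \exists^\infty k\, y(k)=0\}$ (realize a $0$-bit of $y$ by two adjacent $2$'s, i.e.\ an empty block, and a $1$-bit by a single $1$ between consecutive $2$'s), which is comeager; taking $q$ to be the full tree, $\varphi(N\cap[q])$ is dense and non-meager in $2^\omega=[\varphi^*(\stem(q))]$. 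Consequently neither of your proposed repairs can succeed: the fusion argument cannot rescue a false statement, and the ``$\varphi([q])$ is comeager'' observation only handles the exceptional set $[\varphi^*(\stem(q))]\setminus\varphi([q])$, which is a secondary issue.

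The obstruction is exactly the non-injectivity you flagged, but its effect is fatal for the nowhere-dense formulation: to exclude $y\in[u']$ from $\varphi(N_i\cap[q])$ you would need \emph{every} preimage of $y$ inside $[q]$ to avoid $N_i$, and refining one $q\restric\sigma$ controls none of the others. The paper therefore works in the dual, comeager form, where it suffices to exhibit \emph{one} good preimage: starting from $\poset{P}$-open dense sets $A_\alpha$ with $\bigcap_\alpha A_\alpha\subseteq Y\cap[q]$, it recursively builds conditions $\{q_\sigma\such\sigma\in\kappa^{<\kappa}\}$ with $q_{\sigma^\conc i}\leq q_\sigma$, $[q_{\sigma^\conc i}]\subseteq A_{|\sigma|}$, and $\bigcup_i[\varphi^*(\stem(q_{\sigma^\conc i}))]$ open dense in $[\varphi^*(\stem(q_\sigma))]$ --- this last density is where hypothesis (c) is used --- and takes $B_\alpha$ to be the union of the level-$\alpha$ stem-images. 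A point $y$ of the comeager set $\bigcap_\alpha B_\alpha$ is then traced back along a branch of the condition-tree to a point of $\bigcap_\alpha A_\alpha\subseteq\varphi^{-1}(X)$ mapping to $y$, giving $y\in X$. You should replace your treatment of the lemma by an argument of this shape; the rest of your proposal (the reduction, the use of well-sortedness and continuity, the choice of $t'=\varphi^*(\stem(q))$) is sound.
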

We note that the second condition implies $\varphi[[p]]\subseteq [\varphi^*(\stem(p))]$ for each $p\in \mathbb{P}$. The third condition intuitively means that the map $\varphi^*$ is below any condition almost surjective. The key step for the proof is the following lemma.

\begin{lemma}
Let $\mathbb{P},\varphi,\varphi^*$ be as in the Proposition and $X\subseteq 2^{\kappa}$. Define $Y:= \varphi^{-1}[X]$. Assume there is $q\in \mathbb{P}$ such that $Y\cap[q]$ is $\mathbb{P}$-comeager in $[q]$. Then $X\cap [\varphi^*(\stem(q))]$ is comeager in  $[\varphi^*(\stem(q))]$.
\end{lemma}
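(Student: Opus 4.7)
The plan is to unwind the hypothesis as $[q]\setminus Y\subseteq\bigcup_{\alpha<\kappa}A_\alpha$ with each $A_\alpha$ being $\mathbb{P}$-nowhere dense, and to exhibit a family of open dense subsets of $[\varphi^*(\stem(q))]$ whose intersection lies inside $X$. Meagerness of $X^c\cap [\varphi^*(\stem(q))]$ in $[\varphi^*(\stem(q))]$ then follows at once.

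For each $\alpha<\kappa$, I would set
\[
U_\alpha:=\bigcup\{[\varphi^*(\stem(q'))]:q'\in\mathbb{P},\ q'\leq q,\ [q']\cap A_\alpha=\emptyset\}\cap [\varphi^*(\stem(q))].
\]
Openness of $U_\alpha$ is clear. For density, given $t\supseteq\varphi^*(\stem(q))$, condition c) furnishes $\sigma\in q$ with $\varphi^*(\sigma)\supseteq t$; applying $\mathbb{P}$-nowhere density of $A_\alpha$ to $q\restric\sigma$ yields $q'\leq q\restric\sigma$ with $[q']\cap A_\alpha=\emptyset$, and the basic open $[\varphi^*(\stem(q'))]\subseteq [\varphi^*(\sigma)]\subseteq [t]$ sits inside $U_\alpha$ by monotonicity of $\varphi^*$.

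The heart of the proof is the inclusion $\bigcap_{\alpha<\kappa}U_\alpha\subseteq X$. Given $y\in\bigcap_\alpha U_\alpha$, I aim to construct $x\in[q]$ with $\varphi(x)=y$ and $x\notin A_\alpha$ for every $\alpha$; then $x\in [q]\setminus\bigcup_\alpha A_\alpha\subseteq Y$ forces $y=\varphi(x)\in X$. Such an $x$ is produced as the direct limit of stems along a fusion $q=q_0\geq q_1\geq\cdots\geq q_\alpha\geq\cdots$ in $\mathbb{P}$ satisfying, for each $\alpha<\kappa$:
(i) $\stem(q_\alpha)\subsetneq\stem(q_{\alpha+1})$ and $\varphi^*(\stem(q_\alpha))\subseteq y$;
(ii) $|\varphi^*(\stem(q_\alpha))|$ is cofinal in $\kappa$;
(iii) $[q_{\alpha+1}]\cap A_\alpha=\emptyset$.
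At limit stages one uses $<\kappa$-closure of $\mathbb{P}$ to set $q_\beta:=\bigcap_{\alpha<\beta}q_\alpha\in\mathbb{P}$. Then $x:=\bigcup_\alpha\stem(q_\alpha)$ lies in $\bigcap_\alpha[q_\alpha]\subseteq[q]$, avoids every $A_\alpha$, and by (i), (ii) together with condition b) we have $\varphi(x)=y$.

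The main obstacle is the successor step, where both requirements $\varphi^*(\stem(q_{\alpha+1}))\subseteq y$ and $[q_{\alpha+1}]\cap A_\alpha=\emptyset$ have to be secured simultaneously. The membership $y\in U_\alpha$ supplies some $q^\dagger\leq q$ with $\varphi^*(\stem(q^\dagger))\subseteq y$ and $[q^\dagger]\cap A_\alpha=\emptyset$, but this witness need not lie below $q_\alpha$; so one has to re-run the construction inside $q_\alpha$. Concretely, condition c) applied to $q_\alpha$ together with a choice of $\sigma\in q_\alpha$ of minimal length with $\varphi^*(\sigma)\supseteq y\restric(|\varphi^*(\stem(q_\alpha))|+1)$ guarantees, by monotonicity of $\varphi^*$, that $\varphi^*(\sigma)$ does not overshoot $y$; after trimming the stem to the longest initial segment whose $\varphi^*$-image is an initial segment of $y$, one invokes $\mathbb{P}$-nowhere density of $A_\alpha$ below $q_\alpha\restric\sigma$ to extract $q_{\alpha+1}$. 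The delicate point, and the place where condition c) is genuinely used, is precisely this interplay between advancing the stem along $y$ and shrinking to avoid $A_\alpha$ without destroying stem alignment.
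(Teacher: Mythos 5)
Your reduction of the hypothesis to nowhere dense sets, the definition of the sets $U_\alpha$, and the verification that each $U_\alpha$ is open dense are all fine. The genuine gap is in the inclusion $\bigcap_{\alpha<\kappa}U_\alpha\subseteq X$, precisely at the successor step of your fusion, which you yourself flag as the delicate point. You need $q_{\alpha+1}\leq q_\alpha$ with $[q_{\alpha+1}]\cap A_\alpha=\emptyset$ \emph{and} $\varphi^*(\stem(q_{\alpha+1}))$ a longer initial segment of $y$. Condition c) only yields $\sigma\in q_\alpha$ with $\varphi^*(\sigma)\supseteq\varphi^*(\stem(q_\alpha))^\conc\langle y(\ell)\rangle$; it gives no upper bound on $\varphi^*(\sigma)$, so $\varphi^*(\sigma)$ may properly extend the target and then diverge from $y$. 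Your minimality device does not repair this: minimality of $|\sigma|$ does not prevent $\varphi^*$ from growing by more than one bit at the final step (in the coding of Proposition \ref{prop:miller-baire}, a single successor coordinate $\beta\in S_t$ appends the whole block $t$, which can begin with $y(\ell)$ and disagree with $y$ afterwards), and trimming back to the longest node whose image is an initial segment of $y$ may return you to $\stem(q_\alpha)$, i.e.\ no progress. Worse, even if this were fixed, the subsequent shrinking of $q_\alpha\restric\sigma$ to a condition missing $A_\alpha$ lengthens the stem again in a way that a)--c) do not control, so the alignment $\varphi^*(\stem(q_{\alpha+1}))\subseteq y$ is destroyed at the very next stage. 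The root cause is that membership of $y$ in your $U_\alpha$ only certifies a condition whose stem-image is \emph{compatible} with $y$; it does not certify a $\varphi$-preimage of $y$ inside a condition avoiding $A_\alpha$, and a)--c) are too weak to convert the former into the latter one $\alpha$ at a time (note that $\varphi\restric[q']$ need not map onto $[\varphi^*(\stem(q'))]$).

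This is exactly where the paper's proof is organised differently. Instead of running a separate fusion for each $y$, it builds a single tree $\{q_\sigma\such\sigma\in\kappa^{<\kappa}\}$ of conditions, arranging at each node that the stems of the $\kappa$ many children realise, via c), \emph{all} extensions $s\in2^{<\kappa}$ simultaneously, and --- crucially --- it defines the large sets as $B_\alpha=\bigcup\{\varphi[[q_\sigma]]\such\sigma\in\kappa^\alpha\}$, i.e.\ via images of the \emph{bodies}, not via the basic open sets $[\varphi^*(\stem(q_\sigma))]$ that you use. Membership of $y$ in such a $B_{\alpha+1}$ hands you an actual preimage $x\in[q_{\sigma^\conc i}]\subseteq A_\alpha$ with $\varphi(x)=y$, which is precisely the information your $U_\alpha$ discards. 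To salvage your per-$\alpha$ formulation you would have to make the same replacement (and then re-establish largeness of the modified sets); as written, the argument does not go through.
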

\begin{proof}
We are assuming $Y \cap [q]$ is $\mathbb{P}$-comeager, for some $q \in \mathbb{P}$. This implies that there is a collection $\{A_\alpha \such \alpha<\kappa \wedge A_\alpha \text{ is } \mathbb{P}\text{-open dense in }[q]\}$ such that $\bigcap_\alpha A_\alpha \subseteq [q]\cap Y$. W.l.o.g. assume $A_\alpha \supseteq A_\beta$, whenever $\alpha<\beta<\kappa$. Let $t = \varphi^* (\stem(q))$. We want to show that $\varphi[Y]\cap t = X\cap t$ is comeager in $[t]$ i.e., we want to find $\{B_\alpha \such \alpha<\kappa \}$ open dense sets in $[t]$ such that $\bigcap_\alpha B_\alpha \subseteq X\cap [t]$. Given $\sigma\in \kappa^{<\kappa}$ we recursively define on the length of $\sigma$ a set $\{q_\sigma \such \sigma \in\kappa^{<\kappa}  \}\subseteq \mathbb{P}$ with the following properties:
\begin{description}
\item[1.] $q_{\langle\rangle} = q$,
\item[2.] $\forall \sigma \in \kappa^{<\kappa}\; \bigcup_{i} [\varphi^* (\stem(q_{\sigma^\conc i }))]$ is open dense in $[\varphi^* (\stem(q_{\sigma}))]$,
\item[3.] $\forall \sigma \in \kappa^{<\kappa} \forall i \in \kappa \; ([q_{\sigma^\conc i}]\subseteq A_{|\sigma|} \wedge q_{\sigma^\conc i} \leq q_{\sigma})$.
\end{description}
Assume we are at step $\alpha= |\sigma|$. Fix $\sigma\in\kappa^\alpha$ arbitrarily and then put $t_\sigma = \varphi^* (\stem (q_\sigma))$. We first make sure that $2.$ holds. Therefore let $\{ s_i \such i<\kappa \}$ enumerate $2^{<\kappa}$. By condition $c)$ from Proposition \ref{gamma p implies gamma c} we can find $p_i \leq q_\sigma$ such that $\varphi^* (\stem(p_i)) \supseteq {t_\sigma}^\conc s_i$. Since each $A_\alpha$ is $\mathbb{P}$-open dense in $[q]$ we can find for each $i<\kappa$ an extension $q_i \leq p_i$ such that $[q_i] \subseteq \bigcap_{\alpha \leq |\sigma|} A_\alpha$. This ensures that also $3.$ holds and we put $q_{\sigma^\conc i} := q_i$. 
At limit steps $\lambda$, we put for every $\sigma \in \kappa^\lambda$, $q_{\sigma}:= \bigcap_{\beta<|\sigma|} q_\beta$. 
Finally we put  $B_{\alpha}:= \bigcup\{ \varphi[[q_{\sigma}]]\such \sigma\in \kappa^\alpha \}$. We have to check that $\bigcap_\alpha B_\alpha \subseteq X\cap [t]$. Since $t= \stem(q)$ and $q_\sigma \leq q$ we get $\varphi[[q_\sigma]] \subseteq \varphi [[q]]\subseteq [\varphi^*(t)]$ and therefore $B_\alpha \subseteq [t]$ for each $\alpha \in \kappa$. On the other hand by construction of $B_{\alpha+1}$ we know $\varphi^{-1}[B_{\alpha+1}] \subseteq A_\alpha$ and hence $\varphi^{-1}[\bigcap_\alpha B_\alpha]\subseteq \bigcap_\alpha A_\alpha$ which implies $\bigcap_\alpha B_\alpha \subseteq X$. 
\end{proof}
\begin{proof}[Proof of the proposition] Let $X\in \Gamma $ be given and put $Y:= \varphi^{-1} [X]$. Then also $Y\in\Gamma$, since $\Gamma$ is well-sorted and $\varphi$ is continuous.  We now use the lemma to show that for every $t \in 2^{<\kappa}$ there exists $t' \supseteq t$ such that $X \cap [t']$ is meager or $X \cap [t']$ is comeager.  

Fix $t \in 2^{<\kappa}$ arbitrarily and pick $p \in \mathbb{P}$ such that $\varphi^*(\stem(p))\supseteq t$. By assumption $Y$ is $\mathbb{P}$-measurable, and so:
\begin{itemize}
\item in case there exists $q \leq p$ such that $Y \cap [q]$ is $\mathbb{P}$-comeager; put $t':=  \varphi^*(\stem(q))$. By the lemma above, $X \cap [t']$ is comeager in $ [t']$;
\item in case there exists $q \leq p$ such that $Y \cap [q]$ is $\mathbb{P}$-meager, then apply the lemma above to the complement of $Y$, in order to get $X \cap [t']$ be meager in $ [t']$, with $t':=  \varphi^*(\stem(q))$. 
\end{itemize}

By the remark directly after Definition \ref{meager} this suffices to complete the proof.

\end{proof}

\begin{proposition} \label{prop-mathias-baire}
Let $\Gamma$ be a well-sorted family of sets. Then 
\[
\Gamma(\mathias) \Rightarrow \Gamma(\cohen).
\]

\end{proposition}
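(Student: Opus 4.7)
The plan is to apply Proposition \ref{gamma p implies gamma c} with $\kappa=\omega$, $\mathcal{X}=3$ and $\poset{P}=\mathias$, using the Cohen coding $z\mapsto c_z$ already exhibited in the proof of Lemma \ref{t adds cohen reals}(2). Concretely, I would define $\varphi^\ast:3^{<\omega}\to 2^{<\omega}$ by: given $s\in 3^{<\omega}$, list the positions $n_0<n_1<\dots<n_{k-1}$ in $s$ where the value is $2$, and set $\varphi^\ast(s)$ to be the sequence of length $\max(0,k-1)$ whose $i$-th entry is $|\{m:n_i\leq m<n_{i+1},\ s(m)=1\}|\bmod 2$. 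The associated map $\varphi:3^\omega\to 2^\omega$ is then the pointwise limit of $\varphi^\ast(z\restric n)$, padding arbitrarily on the set of $z$ with only finitely many $2$'s (which, as noted before Lemma \ref{t^0 T^0}, forms a $\mathias$-meager set). Continuity (a) is clear since any prescribed initial segment of $\varphi(z)$ is already determined by $z\restric m$ for $m$ large enough to exhibit sufficiently many $2$'s, and coherence (b) holds by construction.

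Condition (c) is the heart of the matter and amounts to repeating, in slightly more abstract form, the construction from the proof of Lemma \ref{t adds cohen reals}(2). Given $q\in\mathias$ and $s\in 2^{<\omega}$, I would build a decreasing sequence $q=q_0\geq q_1\geq\dots\geq q_{|s|}$ as follows: at stage $j<|s|$, let $n^j_0<n^j_1$ be the first two splitting levels of $q_j$, and use the fully-splitting property (item 2 of Definition \ref{variant of mathias forcing}) to pick, for each $i\in\{0,1\}$, a node $t_i\in q_j$ of length $n^j_1+1$ with $t_i(n^j_0)=i$ and $t_i(n^j_1)=2$. The uniform-value property at non-splitting levels (item 4 of Definition \ref{variant of mathias forcing}) guarantees that between the positions $n^j_0$ and $n^j_1$ all coordinates other than $n^j_0$ itself are fixed across the tree, so the parity of $1$'s inside $[n^j_0,n^j_1)$ differs by exactly one between $t_0$ and $t_1$. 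Exactly one $i\in\{0,1\}$ therefore yields parity $s(j)$; set $q_{j+1}:=q_j\restric t_i$. Then $\sigma:=\stem(q_{|s|})$ satisfies $\varphi^\ast(\sigma)\supseteq \varphi^\ast(\stem(q))^\conc s$, as required.

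The main obstacle is the toggle-parity verification of (c): it uses in an essential way both the fully-splitting condition into $\{0,1,2\}$ and the uniform behaviour at non-splitting levels, the two structural features that distinguish $\mathias$ from a purely Mathias-style forcing and that are built into Definition \ref{variant of mathias forcing}. The control is delicate because each bit of Cohen costs \emph{two} successive splitting levels (one to toggle parity via $i\in\{0,1\}$, one to close the block with a $2$), so the coding is not length-preserving and the iteration must manage both splitting levels and parity simultaneously. Once (c) is established, conditions (a) and (b) are formal, and Proposition \ref{gamma p implies gamma c} immediately yields $\Gamma(\mathias)\Rightarrow\Gamma(\cohen)$ for every well-sorted family $\Gamma$.
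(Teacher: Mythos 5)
Your proposal is correct and follows essentially the same route as the paper: the same parity-of-$1$'s-between-consecutive-$2$'s coding $\varphi,\varphi^*$ (restricted to the $\mathias$-comeager set $H$ of sequences with infinitely many $2$'s), verified against conditions a)--c) of Proposition \ref{gamma p implies gamma c}, with condition c) established by the same two-splitting-levels-per-bit construction used in Lemma \ref{t adds cohen reals}(2). Your write-up of c) is in fact slightly more detailed than the paper's one-line "lengthen the stem" argument, and the parity-toggle justification via items 2--4 of Definition \ref{variant of mathias forcing} is exactly right.
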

\begin{proof}
Consider $H:= \{ x\in 3^\omega \such \exists^\infty n \; x(n)=2 \}$. As we remarked right above Lemma \ref{t^0 T^0}, $H$ is $\mathias$-comeager. Thus we have for each set $X\subseteq 3^\omega$:
\begin{align*}
X\text{ is }\mathbb{T}\text{-measurable }\Leftrightarrow X\cap H\text{ is }\mathbb{T}\text{-measurable.}
\end{align*}
Since we are only concerned with $\mathias$-measurability we can work with the set $H$ instead of the whole space $3^\omega$. We want to apply Proposition \ref{gamma p implies gamma c}. For an element $x\in H$ let $A_x=\{n_i \such i<\omega \}$ be an increasing enumeration of all $n\in \omega$ such that $x(n)=2$. This is by definition of $H$ an infinite set. Using this notation we define a function $\varphi :H \rightarrow 2^\omega$ via:
\begin{equation*}
   \varphi (x)(i)=
   \begin{cases}
     0 & \text{if } |\{j<\omega \such n_i < j < n_{i+1} \wedge x(j)=1 \} | \text{ is even } \\
     1 & \text{else.}
   \end{cases}
\end{equation*}
Note that $\varphi$ is surjective but not injective and observe that $\varphi$ induces a map $\varphi^*:3^{<\omega} \rightarrow 2^{<\omega}$ such that for each $x\in H$ and $i<\omega$ we have $\varphi(x)\restric i =\varphi^* (x\restric n_i)$. We have to check that $a), b) $ and $c)$ from Proposition \ref{gamma p implies gamma c} are satisfied. Condition $b)$ is clear. For condition $a)$  we have to show that the pre-image of a basic open set in $2^\omega$ is open in $H$ (regarding the induced topology of $3^\omega$ on $H$). Therefore let $s\in 2^{<\omega}$ be given. It follows 
\begin{align*}
\varphi^{-1} [[s]]=\bigcup_{t\in 3^{<\omega}, \varphi^*(t)=s } [t]\cap H
\end{align*}
which is a union of basic open sets in $H$. \\
So we are left to show that $c)$ holds as well. Therefore fix $q\in \mathbb{T}$ and $s\in 2^{<\omega}$. Let $A_q=\{n_i \such i<\omega\}$ be the corresponding set of splitting levels and $s=(i_1,\dots , i_k)$. Then we can lenghten $\stem(q)$ in order to have the parity of 1s between two subsequent 2 according to the corresponding $i_j$, that means we find $t \in q$ such that $\varphi^*(t) \supseteq \varphi^*(\stem(q))^\conc s$.

So we are able to apply Proposition \ref{gamma p implies gamma c} and get $\Gamma(\mathias) \Rightarrow \Gamma(\cohen)$.

\end{proof}

\section{Some results for the uncountable case}
In this section we investigate some issues concerning Table 2. We will always assume that $\kappa$ is an uncountable regular cardinal such that $\kappa = 2^{<\kappa}$.

\begin{definition}[Club $\kappa$-Miller forcing $\clubmiller$]
A tree $p\subseteq \kappa^{<\kappa}$ is called $\kappa$-Miller tree if it is pruned, $<\kappa$-closed and
\begin{itemize}
\item[(a)] for every $s\in p$ there is an extension $t\supseteq s$ in $p$ such that $\successor(t,p)\subseteq \kappa$ is club. Such a splitting node $t$ is called \emph{club-splitting}.
\item[(b)] for every $x\in [p]$ the set $\{ \alpha <\kappa \such x\restric \alpha \text{ is club-splitting } \}$ is club. 
\end{itemize}
\end{definition}
 \textit{Remark:} Both (a) and (b) ensure that $\clubmiller$ is a $<\kappa$-closed forcing. The set of trees that consist of nodes that are either club-splitting or not splitting is a dense subset of $\clubmiller$.

The following result highlights the connection with $\kappa$-Cohen reals. We remark that a similar result (though in a different context, dealing with a version of $\miller_\kappa$ satisfying (a) but not (b)) has been proven by Mildenberger and Shelah in \cite{MS**}.

\begin{proposition} \label{prop:miller-baire}
Let $\Gamma$ be a well-sorted family of subsets of $\kappa$-reals. Then $\Gamma(\miller^\club_\kappa) \Rightarrow \Gamma(\cohen)$.
\end{proposition}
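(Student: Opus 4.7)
The plan is to invoke Proposition~\ref{gamma p implies gamma c} with $\mathbb{P} = \clubmiller$ and $\mathcal{X} = \kappa$, reducing the task to exhibiting continuous coding maps $\varphi \colon \kappa^\kappa \to 2^\kappa$ and $\varphi^* \colon \kappa^{<\kappa} \to 2^{<\kappa}$ which satisfy (a), (b), (c). The structural feature of $\clubmiller$ that enables the Cohen coding is that at any club-splitting node the successor set is a club of $\kappa$; combined with the classical fact that $\kappa$ admits a partition into two stationary sets, at every such node we will have complete freedom to realize either of two fixed ``colours'' assigned to the ordinals of $\kappa$.

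Concretely, I would fix a stationary, co-stationary set $E \subseteq \kappa$ and its complement $O := \kappa \setminus E$. Since $E$ is stationary, $E \cap C$ is stationary, hence unbounded, for every club $C \subseteq \kappa$; the same holds for $O$. Define $\varphi(x)(\alpha) = 0$ iff $x(\alpha) \in E$, and let $\varphi^*$ be the corresponding pointwise map on $\kappa^{<\kappa}$. Condition~(a) is immediate because the preimage of a basic open set of $2^\kappa$ decomposes as a union of basic open sets of $\kappa^\kappa$, and condition~(b), i.e.\ $\varphi(x)\restric\alpha = \varphi^*(x\restric\alpha)$, holds by the pointwise definitions.

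The content of the proof is verifying~(c): given $q \in \clubmiller$ and $s \in 2^{<\kappa}$, produce $\sigma \in q$ extending $\stem(q)$ with $\varphi^*(\sigma) \supseteq \varphi^*(\stem(q))^\conc s$. I would first use the density remark following the definition of $\clubmiller$ to replace $q$ by a stronger condition in which every node above the stem is either club-splitting or non-splitting, and then run a $<\kappa$-stage fusion --- based on property~(a) and the $<\kappa$-closure of $\clubmiller$ --- to strengthen further to a condition $q' \leq q$ in which the first $|s|$ levels above $\stem(q')$ are uniformly club-splitting. Inside such a $q'$ the desired $\sigma$ is constructed by a straightforward $|s|$-step recursion: at each $\alpha \in [|\stem(q')|, |\stem(q')|+|s|)$ the node $\sigma \restric \alpha$ has a club $C \subseteq \kappa$ of successors in $q'$, and one picks $\sigma(\alpha) \in C \cap E$ or $\sigma(\alpha) \in C \cap O$ according to the required bit $s(\alpha - |\stem(q')|)$; limit stages below $|s|$ are absorbed by $<\kappa$-closure.

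The main obstacle is the preparatory fusion producing $q'$ with $|s|$ consecutive club-splitting levels above its stem, since the definition of $\clubmiller$ only guarantees that club-splitting levels form a club \emph{along each branch} of $q$, not that they are uniformly present on an initial segment. Resolving this will require a careful $\kappa$-Sacks-style fusion, iteratively applying property~(a) to every node already placed in the growing subtree while using $<\kappa$-closure to align the successive extensions into a single subcondition of $q$; if a given $q$ does not admit such an arrangement directly, one first strengthens $q$ by absorbing any intervening non-splitting stretches into the stem, so that the required consecutive structure becomes available after the adjustment. Once condition~(c) is secured, Proposition~\ref{gamma p implies gamma c} applies and yields $\Gamma(\clubmiller) \Rightarrow \Gamma(\cohen)$.
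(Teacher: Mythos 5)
There is a genuine gap, and it is precisely at the point you flag as ``the main obstacle'': the preparatory fusion producing a condition with $|s|$ consecutive club-splitting levels above its stem does not exist in general, and no adjustment of the kind you describe can produce it. Passing to a stronger condition $q' \leq q$ can only \emph{remove} successors, never add them, so $\splitting(q')$ is essentially contained in $\splitting(q)$; if $q$ has no two splitting nodes at consecutive levels (e.g.\ a tree all of whose splitting levels are limit ordinals --- the definition of $\clubmiller$ only requires the splitting levels along each branch to form a club, and a club need not contain two consecutive ordinals), then no $q' \leq q$ has them either. ``Absorbing the non-splitting stretches into the stem'' merely moves the stem up to the next splitting node; immediately above that node a new non-splitting stretch begins. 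Moreover, even granting some splitting structure, your pointwise coding forces every coordinate of $\sigma$ between $|\stem(q)|$ and $|\stem(q)|+|s|$ to contribute a bit of $\varphi^*(\sigma)$, and at non-splitting coordinates that bit is dictated by $q$ (the value there is fixed), so uncontrolled ``junk'' bits get interleaved into the coded string and condition (c) fails. Note also that $s \in 2^{<\kappa}$ may have any infinite length $<\kappa$, which would require that many consecutive splitting levels.

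The fix, which is what the paper does, is to change the coding so that a \emph{single} ordinal codes an arbitrary element of $2^{<\kappa}$: fix a family $\{S_t \such t \in 2^{<\kappa}\}$ of pairwise disjoint stationary subsets of $\kappa$ covering $\kappa$ (possible since $2^{<\kappa}=\kappa$), and let $\varphi^*(\sigma)$ be the concatenation of the strings $t$ with $\sigma(\alpha) \in S_t$. Then condition (c) is witnessed by one step: $\stem(q)$ has a club of successors, $S_s$ is stationary, so some $\beta \in S_s \cap \successor(\stem(q),q)$ gives $\varphi^*(\stem(q)^\conc\beta) = \varphi^*(\stem(q))^\conc s$. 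No fusion and no consecutive splitting levels are needed. Your reduction to Proposition~\ref{gamma p implies gamma c} and the use of stationarity against clubs of successors are the right ingredients; only the granularity of the coding has to change.
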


\begin{proof}
We introduce a coding function $\varphi^* : \kappa^{<\kappa} \rightarrow 2^{<\kappa}$. Therefore  fix a $\kappa $ sized family $\{S_t\subseteq \kappa \such t\in 2^{<\kappa} \}$ of pairwise disjoint stationary sets such that the union of all $S_t$'s covers $ \kappa$  (this is possible since we assume $\kappa=2^{<\kappa}$). Let $\sigma \in \kappa^{<\kappa}$. We define $\varphi^* (\sigma) = {t_{i_{0}}}^\conc {t_{i_1}}^\conc \dots ^\conc {t_{i_{\alpha}}}^\conc \dots$, with $\sigma(\alpha)\in S_{t_{i_\alpha}}$ for all $\alpha<|\sigma|$. Then $\varphi^*$ induces a function $\varphi :\kappa^\kappa \rightarrow 2^\kappa$ via $\varphi (x) \restric\alpha:= \varphi^*(x\restric \alpha)$.


It is easy to see that such maps $\varphi$ and $\varphi^*$ satisfy the three conditions in Proposition \ref{gamma p implies gamma c}; $a)$ and $b)$ are clear, so we only check condition $c)$. So fix $q\in \miller^\club_\kappa$ and $t\in 2^{<\kappa}$. Let $\tau= \stem(q)$. Since $\successor(\tau,q)\subseteq \kappa$ is club and $S_t$ is stationary, we can pick $\beta\in S_t \cap \successor(\tau,q)$. Then $\tau^\conc \beta \in q$ and $\varphi^*(\tau^\conc \beta)=\varphi^*(\tau) ^ \conc t$. Using Proposition \ref{gamma p implies gamma c} we obtain $\Gamma(\miller^\club_\kappa) \Rightarrow \Gamma(\cohen)$ as desired. 
\end{proof}

\begin{remark} \label{miller-cohen}
The map $\varphi$ we used in Proposition \ref{prop:miller-baire} allows us to read off a Cohen $\kappa$-real from the $\clubmiller$-generic. Indeed, let $\{S_t\subseteq \kappa \such t\in 2^{<\kappa} \}$, $\varphi^*$ and $\varphi$ be as above. Let $\dot{z} $ be the $\clubmiller$-name for the generic $\kappa$-real and $\dot{c}$ the $\clubmiller$-name such that $\Vdash_{\clubmiller} \dot{c}= \varphi (\dot{z})\in 2^\kappa$. We claim that $\dot{c}$ is $\kappa$-Cohen in every generic extension. Therefore fix $p\in\clubmiller$ and let $c_p \in 2^{<\kappa}$ be the initial part of $\dot{c}$ decided by $p$ so $c_p = \varphi^*(\stem (p))$. Let $t\in 2^{<\kappa}$ be given. We want to find $q\leq p$ such that $q\Vdash {c_p}^\conc t \subseteq \dot{c}$. Since $\stem(p)$ is club-splitting we can find an $\alpha_0 \in S_t \cap \{\alpha<\kappa \such \stem (p)^\conc \alpha \in p  \}$ and take $q$ to be $p\restric \stem(p)^\conc \alpha_0$ i.e. $\stem(q)$ extends $\stem(p)^\conc \alpha_0$. This implies that $\varphi^*(\stem(q))\supseteq {c_p}^\conc t$ and therefore $q\Vdash {c_p}^\conc t \subseteq \dot{c}$ as demanded.

We also remark that the fact that $\clubmiller$ adds Cohen $\kappa$-reals is not new and it was proven in \cite{BBFM}, even if the authors use a different coding map. 
\end{remark}

Differently from $\mathias$, the Cohen-like behaviour of the $\clubmiller$-generic does not have an impact on the ideals, as shown in the next result.
\begin{lemma}
$\ideal{N}_{\miller^\club_\kappa} = \ideal{I}_{\miller^\club_\kappa}$
\end{lemma}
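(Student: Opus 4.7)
The plan is to prove the nontrivial containment $\ideal{I}_{\clubmiller}\subseteq\ideal{N}_{\clubmiller}$ by a fusion of length $\kappa$. Given $p\in\clubmiller$ and $X=\bigcup_{\alpha<\kappa}N_\alpha$ with each $N_\alpha\in\ideal{N}_{\clubmiller}$, the goal is to produce $q\leq p$ with $[q]\cap X=\emptyset$.

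First I would set up the fusion relation: for $\alpha<\kappa$ and $q,p\in\clubmiller$, define $q\leq_\alpha p$ to mean $q\leq p$ together with the requirement that every club-splitting node of $p$ of order type at most $\alpha$ is still a node of $q$ with the same (hence still club) successor set. Using $\kappa^{<\kappa}=\kappa$ together with the $<\kappa$-closure of the trees, the expected fusion lemma for $\clubmiller$ would assert that any sequence $\langle p_\beta:\beta<\kappa\rangle$ with $p_\gamma\leq_\beta p_\beta$ whenever $\beta<\gamma$, and $p_\lambda:=\bigcap_{\beta<\lambda}p_\beta$ at limits, has $\bigcap_{\beta<\kappa}p_\beta\in\clubmiller$ as a lower bound.

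Next I would construct such a sequence with $p_0=p$, producing $p_{\alpha+1}$ by handling $N_\alpha$ as follows: for each of the at most $\kappa$-many club-splitting nodes $t$ of $p_\alpha$ of order type $\alpha$ and each $\beta\in\successor(t,p_\alpha)$, invoke nowhere density to pick $q_{t,\beta}\leq p_\alpha\restric t^\conc\beta$ with $[q_{t,\beta}]\cap N_\alpha=\emptyset$; then glue the $q_{t,\beta}$'s back together so that $t$ stays club-splitting with its original successor set. The resulting $p_{\alpha+1}\leq_\alpha p_\alpha$ satisfies $[p_{\alpha+1}\restric t]\cap N_\alpha=\emptyset$ for every such $t$.

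Setting $q=\bigcap_{\alpha<\kappa}p_\alpha\in\clubmiller$, I would observe that every $x\in[q]\subseteq[p_\alpha]$ meets a club-splitting node of $p_\alpha$ of order type exactly $\alpha$ by clause~(b) of the definition of $\clubmiller$ (the set of club-splitting levels along $x$ in $p_\alpha$ is club, hence of order type $\kappa$), so $x\in[p_{\alpha+1}\restric t]$ for some such $t$, forcing $x\notin N_\alpha$. Varying $\alpha$ yields $[q]\cap X=\emptyset$.

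The hard part will be the fusion lemma itself, namely verifying that $\bigcap_{\beta<\kappa}p_\beta$ is pruned and satisfies both clauses (a) and (b) of the definition of $\clubmiller$. The relation $\leq_\alpha$ is engineered precisely so that any club-splitting node of order type $\leq\alpha$ appearing in $p_\alpha$ is frozen into every subsequent member of the sequence; by stage $\kappa$ every such node of every order type $<\kappa$ survives into the intersection, which in turn secures both pruning and the club set of club-splitting levels along each branch.
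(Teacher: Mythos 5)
Your proposal is correct and follows essentially the same route as the paper: a fusion sequence of length $\kappa$ in which, at stage $\alpha+1$, one shrinks below the splitting nodes of order type $\alpha$ so as to avoid the $\alpha$-th nowhere dense set, and then verifies that the $\kappa$-intersection is still a condition. Your version is in fact slightly more careful than the paper's sketch, since you pass to each immediate successor $t^\conc\beta$ of an order-type-$\alpha$ splitting node before shrinking, which is what actually guarantees that $t$ remains club-splitting with its original successor set and hence that the fusion relation $\leq_\alpha$ is preserved.
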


\begin{proof}
The proof is rather standard. We report a sketch of it here just for completeness. Given $\{D_i: i <\kappa  \}$ a family of $\miller^\club_\kappa$-open dense sets and $p \in \miller^\club_\kappa$ we simply construct a fusion sequence $\{q_i:i<\kappa  \}$ so that $q:= \bigcap_{i<\kappa}q_i \leq p$, for every $i<\kappa$, $[q_i] \subseteq D_i$, and for every $j<i $, $q_i \leq_j q_j$, i.e., $q_i \leq q_j$ and for every $j\leq i$, $\splitting_j(q_i)=\splitting_j(q_j)$. This can be done via an easy recursive construction: at limit steps $i$, simply put $q_i:= \bigcap_{j<i}q_j$; at successor step $i+1$, for every $t \in \splitting_{i}(q_i)$, pick $p(t) \leq q_i \restric t$ such that $p(t) \in D_i$, and then put $q_{i+1}:= \bigcup \{ p(t): t \in \splitting_i(q_i) \}$.
\end{proof}

\begin{definition}[$\kappa$-Mathias forcing $\mathbb{R}_\kappa$] A $\kappa$-Mathias condition is a tuple $(s,A)$, where $s\in [\kappa]^{<\kappa}, A \in [\kappa]^\kappa $ such that $\operatorname{sup} ( s ) < \operatorname{min} (A)$. The partial order on $\mathbb{R}_\kappa$ is defined by:
\begin{alignat*}{1} 
(s,A)\leq (t,B) \Leftrightarrow t\subseteq s,A\subseteq B\text{ and }t\setminus s \subseteq A.
\end{alignat*}
\end{definition}

\begin{lemma} \label{lemma:kappa-mathias-ideal}
$\ideal{N}_{\mathbb{R}_\kappa}\neq \ideal{I}_{\mathbb{R}_\kappa}$
\end{lemma}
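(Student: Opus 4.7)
The strategy is to transplant the proof of Lemma \ref{t^0 T^0} to the uncountable setting, using the Cohen $\kappa$-real added by $\mathbb{R}_\kappa$ (Remark 30 of \cite{Lag16}) as the source of a family of $\mathbb{R}_\kappa$-nowhere dense sets whose union is not $\mathbb{R}_\kappa$-nowhere dense.

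First, fix the coding $z \mapsto c_z \in 2^\kappa$ from \cite{Lag16} that turns the $\mathbb{R}_\kappa$-generic into a $\kappa$-Cohen real. The only property I will need is the following \emph{Cohen extension property}: each condition $(s,A) \in \mathbb{R}_\kappa$ canonically decides an initial segment $c^{(s,A)} \in 2^{<\kappa}$ of $c_{\dot z}$, and for every $t \in 2^{<\kappa}$ there is $(s',A') \leq (s,A)$ with $c^{(s',A')} \supseteq c^{(s,A)} \conc t$. For each $\alpha < \kappa$ define
\[
M_\alpha := \{z \in [\kappa]^\kappa \such c_z(\beta) = 0 \text{ for all } \beta \geq \alpha\}.
\]

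Next, I would argue each $M_\alpha$ is $\mathbb{R}_\kappa$-nowhere dense. Given any $(s,A) \in \mathbb{R}_\kappa$, apply the Cohen extension property once to reach a condition whose decided Cohen initial segment has length at least $\alpha$, and then a second time with some $t \in 2^{<\kappa}$ containing a $1$; the resulting condition $(s',A') \leq (s,A)$ forces $c_{\dot z}(\beta) = 1$ for some $\beta \geq \alpha$, so $[(s',A')] \cap M_\alpha = \emptyset$.

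The main step is to show $\bigcup_{\alpha<\kappa} M_\alpha \notin \ideal{N}_{\mathbb{R}_\kappa}$. Fix any $(s,A) \in \mathbb{R}_\kappa$. I would construct by transfinite recursion a decreasing chain $(s_\gamma, A_\gamma)_{\gamma<\kappa}$ below $(s,A)$ with the property that each successor step $(s_{\gamma+1}, A_{\gamma+1})$ is obtained from $(s_\gamma, A_\gamma)$ by the Cohen extension property applied with $t = \langle 0 \rangle$, and at limit stages $\delta$ one sets $s_\delta = \bigcup_{\gamma<\delta} s_\gamma$ together with a suitable pseudo-intersection $A_\delta \subseteq \bigcap_{\gamma<\delta} A_\gamma$ above $\sup(s_\delta)$. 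Ensuring the lengths $|c^{(s_\gamma, A_\gamma)}|$ are cofinal in $\kappa$, the real $z := \bigcup_\gamma s_\gamma \in [\kappa]^\kappa$ then belongs to $[(s,A)]$ and has $c_z$ eventually $0$, so $z \in M_\alpha$ for $\alpha := |c^{(s,A)}|$, proving $[(s,A)] \cap \bigcup_\alpha M_\alpha \neq \emptyset$.

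The main obstacle is the limit step of the construction above: since $\mathbb{R}_\kappa$ is not straightforwardly $<\kappa$-closed (the intersection of fewer than $\kappa$ many sets in $[\kappa]^\kappa$ need not have size $\kappa$), one has to choose the $A_{\gamma+1}$'s at successor stages with enough foresight to keep $|A_\delta| = \kappa$ at limits. This is a standard diagonal/fusion bookkeeping argument, leveraging the assumption $\kappa = 2^{<\kappa}$ made at the beginning of Section 4.
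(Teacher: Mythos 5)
Your overall strategy --- reading a Cohen $\kappa$-real off the generic and letting $M_\alpha$ be the set of reals whose code is eventually $0$ --- is the same idea that drives the paper's proof: the coloring $C:[\kappa]^\omega\to\{0,1\}$ by parity of $|a\triangle\tilde a|$ and the sets $X_i=\{x \such \forall j>i\; C(b^x_j)=0\}$ are exactly a concrete instance of your $c_z$ and $M_\alpha$. However, two load-bearing points are left as black boxes, and one of them is a genuine gap. First, you need the coding $z\mapsto c_z$ to be defined on all of $[\kappa]^\kappa$ (not just in generic extensions) and to be \emph{local}, in the sense that a condition which forces $c_{\dot z}(\beta)=1$ actually excludes \emph{every} branch through it from $M_\alpha$; the bare fact that $\mathbb{R}_\kappa$ adds Cohen reals does not give this. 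The paper supplies precisely this missing content by constructing the coloring explicitly: the color of the $j$-th countable block of $x$ depends only on that block, can be flipped by deleting a single element, and can be made $0$ on all blocks of any $\kappa$-sized set.

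Second, and more seriously, the limit stages of your fusion do not go through as stated. $\mathbb{R}_\kappa$ is not $<\kappa$-closed --- a decreasing sequence of fewer than $\kappa$ many sets in $[\kappa]^\kappa$ may have small intersection --- and the hypothesis $\kappa=2^{<\kappa}$ is irrelevant to this obstacle; there is no general fusion apparatus for $\mathbb{R}_\kappa$ (which is exactly why one often passes to $\mathbb{R}^\club_\kappa$). The repair is to observe that for a block-based coding each application of your extension property can be realized by a condition of the form $(s', A\setminus(\sup s'+1))$, i.e. only a bounded initial segment of the side set is ever discarded; then at a limit $\delta<\kappa$ the intersection of the side sets is $A$ minus a bounded set, and regularity of $\kappa$ saves the construction. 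Better still, the fusion is unnecessary: to show $\bigcup_\alpha M_\alpha\notin\ideal{N}_{\mathbb{R}_\kappa}$ it suffices, for an arbitrary $(t,B)$, to exhibit a single $x=t\cup y$ with $y\in[B]^\kappa$ and $c_x$ eventually $0$, and such a $y$ can be chosen block by block inside the fixed set $B$ (removing at most one point per block) --- which is exactly what the paper does. As written, your proposal needs both of these points filled in before it constitutes a proof.
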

\begin{proof}
We first clarify what is meant with $\ideal{N}_{\mathbb{R}_\kappa}$: $X\subseteq [\kappa]^\kappa$ is called $\mathbb{R}_\kappa$-nowhere dense if for each $(s,A)\in \mathbb{R}_\kappa$ there is a stronger condition $(t,B)\leq (s,A)$ such that
\begin{align}
 \forall x\in X \forall	y\in [B]^\kappa (x\neq t\cup y). \label{equa}
\end{align} 
We define an equivalence relation on the set of countably infinite subsets of $\kappa$. For $a,b\in [\kappa]^\omega$ let $a\sim b :\Leftrightarrow |a\triangle b|<\omega$. We fix a system of representatives. For $a\in [\kappa]^\omega$ we denote the representative of $\{ b\in [\kappa]^\omega \such b\sim a \}$ with $\tilde{a}$. Then we define a coloring function $C: [\kappa]^\omega \rightarrow \{0,1 \}$ as follows:
\begin{equation*}
   C(a)=
   \begin{cases}
     0 & \text{if } |a\triangle \tilde{a}| \text{ is even } \\
     1 & \text{else.}
   \end{cases}
\end{equation*}
We can identify $x\in [\kappa]^\kappa$ with it's increasing enumeration $\chi:\kappa\rightarrow\kappa$ given by $\chi(\xi):=\min\{ x\setminus \bigcup_{\alpha<\xi} \chi(\alpha)\}$. Let $\{ \alpha_i \such i<\kappa \}$ enumerate the limit ordinals $<\kappa$. For $x\in [\kappa]^{ \kappa}$ and $i<\kappa$ we define the countable set $b^x_i := \{ x(\xi) \such \alpha_i < \xi < \alpha_{i+1} \}\subseteq \kappa$.\\

\noindent\textit{Claim:} The set $X_i:= \{ x\in [\kappa]^{ \kappa} \such \forall j>i \; C (b^x_j)=0 \}$ is $\mathbb{R}_\kappa$-nowhere dense for all $i<\kappa$, but their union is not. \\\\
\textit{Proof of the claim}. Let $(s,A)$ be a $\kappa$-Mathias condition and $i<\kappa$ be given. Fix $j>i$. Then $A\subseteq \kappa$ is of size $\kappa$. By removing at most one element of $A$, we find $A'\subseteq A$ such that $C(b^{A'}_j)=1$. We extend $s$ with the first $\alpha_{j+1}$ elements of $A'$ to get $t:= s \cup \{A'(\xi) \such \xi\leq \alpha_{j+1}\} \in \kappa^{ <\kappa}$. Now we can shrink $A'$ to $B:= A'\setminus (A'(\alpha_{j+1} )+1) $ in order to obtain a $\kappa$-Mathias condition $(t,B)\leq (s,A)$ fulfilling the requirement (\ref{equa}). This proves the claim.

However the union $X:=\bigcup_{i<\kappa} X_i$ can not be $\mathbb{R}_\kappa$-nowhere dense. In fact, let $(s,A)$ be a $\kappa$-Mathias condition. We can always find for $i>\operatorname{otp}(s)$ a subset $B\subseteq A$ of size $\kappa$ such that $C(b^B_j)=0$, for all $j>i$ and hence (\ref{equa}) is false for $X_i$ and $(s,B)$. 
\end{proof}

(The coloring introduced above requires AC. However the result needs not AC, as we can also consider another kind of coloring, as noted by Wohofsky and Koelbing during the writing of \cite{KKLW**}: fix $S \in [\kappa]^\kappa$ stationary and co-stationary and define the coloring $C: [\kappa]^\omega \rightarrow \{ 0,1 \}$ by $C(a):=0$ iff $\sup a \in S$.)

\begin{remark} \label{reamrk-kappa-hechler}
Proposition \ref{gamma p implies gamma c} also applies for $\poset{P} \in \{ \poset{D}_\kappa, \poset{E}_\kappa \}$. The coding function $\varphi: \kappa^\kappa \rightarrow 2^\kappa$ we need in this case is given by $\varphi(x)(i)= x(i) \text{ mod } 2$, similarly to the $\omega$-case. It is straightforward to prove that such a $\varphi$ (and the natural corresponding $\varphi^*$) satisfies the required properties of Proposition \ref{gamma p implies gamma c}.
\end{remark}
 $st$

\end{document}


\endinput